\numberwithin{equation}{section}
\newtheorem{definition}{Definition}[section]
\newtheorem{lemma}[definition]{Lemma}
\newtheorem{theorem}[definition]{Theorem}
\newtheorem{corollary}[definition]{Corollary}
\newtheorem{proposition}[definition]{Proposition}
\newtheorem{em-example}[definition]{Example}
\newtheorem{em-def}[definition]{Definition}
\newtheorem{em-remark}[definition]{Remark}
\newtheorem{em-question}[definition]{Question}
\newenvironment{example}{\begin{em-example} \em }{ \end{em-example}}
\newenvironment{remark}{\begin{em-remark} \em }{\end{em-remark}}
\newcommand{\R}{\mathbb R}
\newcommand{\N}{\mathbb N}
\newcommand{\C}{\mathcal C}
\newcommand{\Z}{\mathbb Z}
\def\mod{\mathbf{Mod}}
\def\ent{\mathrm{ent}}
\def\End{\mathrm{End}}
\def\mod#1{\mathrm{Mod}(#1)}
\def\F{\mathcal F}
\def\lFin{\mathrm{lFin}}
\def\Fin{\mathrm{Fin}}
\def\Ker{\mathrm{Ker}}
\def\ker{\mathrm{Ker}}
\def\hom{\mathrm{Hom}}
\def\mult{\mathrm{mult}}
\title[Addition Theorem for non-discrete entropy]{The Addition Theorem for algebraic entropies induced by non-discrete length functions}
\author{Luigi Salce}
\address{Dipartimento di Matematica, Via Trieste 63, 35121 Padova, Italy}
\email{salce@math.unipd.it}
\author{Simone Virili}
\email{virili@math.unipd.it}
\thanks{Research supported by ``Progetti di Eccellenza 2011/12" of Fondazione CARIPARO}
\keywords{endomorphism rings, algebraic entropy, valuation rings}
\subjclass[2010]{Primary: 16S50, 16D10 Secondary:  13F30}
\begin{document}

\maketitle

\begin{abstract}
The validity of the Addition Theorem for algebraic entropies $\ent_L$ induced by non-discrete length functions $L$ on the category of locally $L$-finite modules over arbitrary rings is proved. Concrete examples of non-discrete length functions and their induced algebraic entropies are provided.
\end{abstract}


\section{Introduction}

Probably the most important result in the theory of algebraic entropies, and certainly the most difficult to prove, is the Addition Theorem (AT, for short). It states that, given an endomorphism $\phi$ of an $R$-module $M$ satisfying suitable conditions, and a $\phi$-invariant submodule $N$, the following equality holds
$$\ent (\phi) = \ent(\phi \restriction_N) + \ent(\bar \phi),$$ 
where $\bar \phi$ is the map induced by $\phi$ on the factor module $M/N$. Here $R$ denotes an arbitrary ring and $\ent$ is an algebraic entropy however defined. The importance of AT relies on the fact that the value of the entropy of the endomorphism $\phi$ on the whole module $M$ can be realized as  the sum of the entropies of the induced maps on cyclic trajectories of $\phi$-invariant subsections (submodules of factors). We refer to \cite{DGSZ}, \cite{SZ}, \cite{SVV}, \cite{DG}, \cite{VI} and \cite{DGSV} for the proofs of AT for various different algebraic entropies and their discussion. For unexplained notions appearing above and along this paper we refer to \cite{SVV}.

The goal of this paper is to prove that AT holds for algebraic entropies $\ent_L$ induced by non-discrete length functions $L: {\rm Mod} (R) \to \R_{\geq 0} \cup \{ \infty \}$ on the subcategory ${\rm lFin}_L[X]$ of ${\rm Mod} (R)$ consisting of the locally $L$-finite modules. The motivation is that AT was proved in \cite{SVV} for ${\rm lFin}_L[X]$ only for algebraic entropies $\ent_L$ induced by discrete length functions $L$, which are discrete entropies indeed. As remarked in \cite{SVV}, the proof of that result ``strongly depends on the discreteness of $L$ since it makes use of inductive arguments on the values of $L$". 

Dealing with non-discrete length functions, a completely new approach to the proof of AT is needed. We  reach it via a series of reductions. First we show that the algebraic entropy $\ent_L(\phi)$ can be computed considering only submodules of finite $L$-length of $M$ which are finitely generated (Proposition \ref{sanatoria}). Then 
we prove a formula for $\phi$ an automorphism (Proposition \ref{nolimit}), which allows us to escape the limit calculation in the definition of $\ent_L$, and to give a direct proof of AT  for locally $L$-finite modules when $\phi$ is bijective (Theorem \ref{AT_for_inv}). Finally, we show how to reduce to the bijective case first factoring modulo the hyperkernel of $\phi$ (Proposition \ref{hyperkernel}), and then using  the tool of central localisation (Corollary \ref{coro_loc_AT}), which was introduced in \cite{G} and, in our particular case, corresponds to tensoring an $R[X]$-module by the ring $R[X,X^{-1}]$. As an application, we prove in Section \ref{Unique_section} a Uniqueness Theorem for $\ent_L$, where $L$ is the non-discrete length function induced on $\mod R$, for $R$ a non-discrete archimedean valuation domain.

One of the consequences of the AT proved in \cite{SVV} for discrete length functions is that, if the base ring is Noetherian, then $\ent_L$ coincides with the $L$-multiplicity symbol (which is the usual multiplicity symbol when $L=\ell$ is the composition length) on the class of locally $L$-finite modules. With this in mind, one may consider the entropy induced by a non-discrete length function as a non-discrete multiplicity symbol (see also Remark \ref{mult_rem}).

We remark that the first available examples of non-discrete length functions are those induced by a non-discrete rank one valuation $v : Q \to \R \cup \{ \infty \}$, where $Q$ is the field of quotients of a valuation domain $R$ and the value group $\Gamma(Q)$ is a dense additive subgroup of the reals (see \cite{NR}). These length functions and the induced $L$-entropies $\ent_L$ are our favourite sources of examples. 

\subsection*{Notation}
All along the paper, except when explicitly stated, $R$ denotes an arbitrary associative unitary ring (not necessarily commutative), and ${\rm Mod}(R)$ is the category of right $R$-modules. The ring of endomorphisms of a right $R$-module $M$ is denoted by $\End_R(M)$. $\mathcal F(M)$ denotes the family of all the finitely generated submodules of $M$. As usual, the ring of polynomials over $R$ is denoted by $R[X]$, and the ring of Laurent polynomials $R[X,X^{-1}]$ by $R[X^{\pm 1}]$. We denote by $\N, \N_{\geq1}, \R, \R_{\geq 0}$ the sets of the naturals, positive integers, reals and non-negative reals, respectively. The set $\R_{\geq 0} \cup \{\infty\}$ is denoted by $\R^*$.

\section{Extensions of scalars and length functions}\label{mod_pol}
In this section we explain the setting where entropy becomes a length function and the tool of the central localization; we refer to \cite{R} for this last notion.

\subsection{Length functions.} Length functions were axiomatized by Northcott and Reufel \cite{NR}, generalizing the composition length of modules. This concept was investigated by  V\'amos \cite{V1,V2}, Zanardo \cite{Z}, and, in the setting of Grothendieck categories, by the second named author \cite{VI}.

A function $L:{\rm Mod}(R)\to \R^*$ 
is an {\em invariant} if $L(0)=0$ and $L(M)=L(M')$ whenever $M\cong M'$. Furthermore, 
\begin{enumerate}[\rm (1)]
\item $L$ is {\em additive} if, for any short exact sequence $0\to M'\to M\to M''\to 0$ in ${\rm Mod}(R)$,
$L(M)=L(M')+L(M'')$;
\item $L$ is {\em upper continuous } if $L(M)=\sup\{L(F):F\in \F(M)\}$,
for all $M\in {\rm Mod}(R)$.
\end{enumerate}
An invariant which is both additive and upper continuous, is said to be a {\em length function}. A module $N$ such that $L(N)<\infty$ is said to have {\em finite $L$-length} or to be {\em $L$-finite}. Following \cite{SVV}, a module $M \in {\rm Mod}(R)$ is said to be {\em locally $L$-finite} if its cyclic (or, equivalently, finitely generated) submodules are $L$-finite. Given an $R$-module $M$, we adopt the following notations:
\begin{enumerate}[\rm --]
\item $\Fin(L)$ is the class of all $L$-finite modules, $\Fin_L(M)$ is the class of the submodules of $M$ of finite $L$-length;
\item $\lFin(L)$ is the class of locally $L$-finite modules. 
\end{enumerate}
Notice that $M$ is locally $L$-finite if and only if $\F (M) \subseteq \Fin_L(M)$. 

Some natural example of length functions are: the dimension of vector spaces, the logarithm of the cardinality of Abelian groups, the rank of modules over integral domains, the composition length of modules; notice that these examples also satisfy  the following property:

\begin{enumerate}[\rm (3)]
\item a  length function $L$ is {\em discrete} provided its set of finite values is a discrete (i.e., order isomorphic to $\N$) subset of $\R_{\geq 0}$. 
\end{enumerate}
An example of a non-discrete length function was found by Northcott and Reufel \cite{NR} (see also the last part of Section 4 and Example \ref{ex2.2}).


\begin{lemma} \label{fin_gen_chain}
Let $L: {\rm Mod}(R) \to \R^*$ be a length function. If $N\in \Fin(L)$, then there exists a countable ascending chain of finitely generated submodules of $N$:
$$0=N_0 \leq N_1 \leq \ldots \leq N_n \leq \ldots $$
such that $\sup_{n\in\N} L(N_n)=L(N)$.
\end{lemma}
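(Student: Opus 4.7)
The plan is to exploit upper continuity to pick a sequence of finitely generated submodules of $N$ whose $L$-values approximate $L(N)$ from below, and then replace this sequence by an ascending chain by taking successive finite sums. Since $N \in \Fin(L)$, the value $L(N)$ is a finite real number, which makes the ``$\varepsilon$-approximation'' strategy meaningful.

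Concretely, if $L(N) = 0$ set $N_n = 0$ for every $n$. Otherwise, for each $n \geq 1$ upper continuity yields some $F_n \in \F(N)$ with
\[
L(F_n) > L(N) - \tfrac{1}{n}.
\]
Define $N_n := F_1 + F_2 + \cdots + F_n$. Each $N_n$ is a finitely generated submodule of $N$, and obviously $N_n \leq N_{n+1}$, so we have the required ascending chain starting at $N_0 = 0$.

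It remains to show $\sup_n L(N_n) = L(N)$. The key observation is that additivity of $L$ gives monotonicity: from $F_n \leq N_n \leq N$ and the short exact sequences $0 \to F_n \to N_n \to N_n/F_n \to 0$ and $0 \to N_n \to N \to N/N_n \to 0$, we read off
\[
L(N) - \tfrac{1}{n} < L(F_n) \leq L(N_n) \leq L(N).
\]
Letting $n \to \infty$ yields $\sup_{n \in \N} L(N_n) = L(N)$.

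There is essentially no obstacle here: the only point to be careful about is that the additivity relation $L(N) = L(N_n) + L(N/N_n)$ is an equation in $\R^*$, but because $L(N) < \infty$ by hypothesis, every summand must also be finite, so the inequalities above are genuine inequalities of real numbers rather than formal manipulations with $\infty$. This is exactly why the hypothesis $N \in \Fin(L)$, rather than merely ``$N$ locally $L$-finite'', is needed.
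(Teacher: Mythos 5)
Your proof is essentially identical to the paper's: both use upper continuity to choose $F_n \in \F(N)$ with $L(F_n) > L(N) - 1/n$, then pass to the ascending chain $N_n = F_1 + \cdots + F_n$ and invoke monotonicity of $L$ (a consequence of additivity) to conclude. The extra remarks — splitting off the $L(N)=0$ case (which the general argument already covers) and noting that $L(N)<\infty$ keeps the arithmetic in $\R$ rather than $\R^*$ — are harmless elaborations of the same idea.
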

\begin{proof}
Since $L(N)=\sup_{F \in \mathcal F (N)} L(F)$, for all $k\in\N_{\geq1}$ there exists $F_k\in\F(N)$ such that $L(F_k)>L(N)-1/k$. One can conclude letting $N_0=0$ and $N_n=F_1+\ldots+F_n$, so that $L(N_n)\geq L(F_n)>L(N)-1/n$, for all $n\in\N$.
\end{proof}

In the notation of the above lemma, if $L$ is discrete then the sequence $\{L(N_n)\}_{n\in\N}$ stabilizes so we can always find a finitely generated submodule $N'$ of $N$ such that $L(N')=L(N)$.

\begin{lemma} \label{prep}
Let $L: {\rm Mod}(R) \to \R^*$ be a length function, let $F\in\Fin(L)$, and let $F_0 \leq F_1 \leq \cdots \leq F_n \leq \cdots$ be an ascending sequence of submodules of $F$. If we let $F_\infty =\bigcup_{n\in\N} F_n$, then $\displaystyle\lim_{n \to \infty} L(F/F_n)= L(F/F_\infty)$.
\end{lemma}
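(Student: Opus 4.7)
The plan is to reduce the statement about quotients to a statement about the submodules $F_n$ themselves, via additivity, and then conclude by invoking upper continuity.

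First I would apply additivity to the short exact sequences $0 \to F_n \to F \to F/F_n \to 0$ and $0 \to F_\infty \to F \to F/F_\infty \to 0$. Since $F \in \Fin(L)$ and $F_n, F_\infty \leq F$, all values involved are finite, so we may write
\[
L(F/F_n) = L(F) - L(F_n), \qquad L(F/F_\infty) = L(F) - L(F_\infty).
\]
Therefore it suffices to prove $\lim_{n\to\infty} L(F_n) = L(F_\infty)$.

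Next I would observe that the sequence $\{L(F_n)\}_{n \in \N}$ is monotonically increasing: indeed, applying additivity to $0 \to F_n \to F_{n+1} \to F_{n+1}/F_n \to 0$ yields $L(F_{n+1}) = L(F_n) + L(F_{n+1}/F_n) \geq L(F_n)$. Moreover each $L(F_n) \leq L(F_\infty)$ by additivity applied to $0 \to F_n \to F_\infty \to F_\infty/F_n \to 0$. Hence $\lim_n L(F_n) = \sup_n L(F_n) \leq L(F_\infty)$.

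For the reverse inequality I would use upper continuity of $L$. By definition, $L(F_\infty) = \sup\{L(E) : E \in \F(F_\infty)\}$. Given any finitely generated $E \leq F_\infty$, each of its generators lies in some $F_{n_i}$, so $E$ is contained in $F_N$ for $N = \max_i n_i$; hence $L(E) \leq L(F_N) \leq \sup_n L(F_n)$. Taking the supremum over $E \in \F(F_\infty)$ gives $L(F_\infty) \leq \sup_n L(F_n)$, and combining with the previous inequality yields equality. This is the substantive step, though it is essentially a direct appeal to the two defining properties of a length function; I do not anticipate any real obstacle, as the argument is elementary once one recognizes that additivity on both sides lets one pass between quotients and submodules, and upper continuity handles the passage to the direct limit.
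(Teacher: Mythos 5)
Your proof is correct and follows essentially the same approach as the paper's: additivity converts $L(F/F_n)$ into $L(F)-L(F_n)$, monotonicity identifies the limit as a supremum, and upper continuity yields $\sup_n L(F_n)=L(F_\infty)$. You simply spell out the upper-continuity step (passing through finitely generated submodules of $F_\infty$) which the paper leaves implicit.
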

\begin{proof}
Using the additivity and the upper continuity of $L$, we get:\\
$\displaystyle\lim_{n \to \infty} L({F}/{F_n})= \inf_{n\in\N} (L(F)-L(F_n))=L(F) - \sup_{n\in\N} L(F_n)=L(F) - L(F_\infty)\,.$ \end{proof}

\subsection{Central localization.}
Let us start fixing some conventions for right $R[X]$-modules. Indeed, we use the notation $M_\phi$, with $M$ a right $R$-module and $\phi\in\End_R(M)$, to denote the right $R[X]$-module $M_{R[X]}$, where $X$ acts on $M$ as $\phi$. An $R$-homomorphism $\alpha:M\to N$ induces an $R[X]$-homomorphism $M_\phi\to N_\psi$ if and only if $\alpha \cdot \phi = \psi \cdot \alpha$. For more details on these notions we refer to \cite{SVV}.

\smallskip
Notice that $\mod {R[X^{\pm 1}]}$ can be considered as the full subcategory of $\mod {R[X]}$ consisting of those objects $M_\phi$ for which $\phi$ is an automorphism. In fact, the inclusion $\mod {R[X^{\pm 1}]}\to \mod {R[X]}$ has a left adjoint functor 
$$-\otimes_{R[X]}R[X^{\pm1}]:\mod {R[X]}\to \mod {R[X^{\pm1}]}\,,$$ whose properties are crucial in the proof of AT:

\begin{lemma}\label{lemma_central_loc}
The functor $-\otimes_{R[X]}R[X^{\pm1}]$ is exact. Furthermore, given a right $R[X]$-module  $M_\phi$ and letting $\ker_\infty(\phi)=\bigcup_{n\in\N}\ker(\phi^n)$, the right $R[X]$-module  $M_\phi\otimes_{R[X]} R[X^{\pm1}]$ is isomorphic to a direct union of copies of $(M/\ker_\infty(\phi))_{\bar \phi}$, where $\bar \phi$ is the map induced by $\phi$. 
\end{lemma}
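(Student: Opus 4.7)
My plan is to invoke the general theory of classical localization at central multiplicative subsets, after observing that $R[X^{\pm1}]$ is exactly this localization of $R[X]$ with respect to $S=\{X^n:n\in\N\}$, a central multiplicative system since $X$ commutes with every element of $R[X]$. Standard theory (as in \cite{R}) then guarantees that $R[X^{\pm1}]$ is a flat right $R[X]$-module and that tensoring with it is exact, yielding the first assertion.

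For the second assertion, I would first identify the kernel of the unit map $\eta:M_\phi\to M_\phi\otimes_{R[X]}R[X^{\pm1}]$, $m\mapsto m\otimes 1$. By the explicit description of localization at $S$, an element $m\in M$ is annihilated by $\eta$ if and only if $X^n\cdot m=0$ in $M_\phi$ for some $n\geq 0$, equivalently $\phi^n(m)=0$; that is, $\ker\eta=\ker_\infty(\phi)$. The submodule $\ker_\infty(\phi)$ is visibly $\phi$-invariant, so $\phi$ induces an endomorphism $\bar\phi$ on the quotient $N:=M/\ker_\infty(\phi)$, and $\bar\phi$ is injective: if $\bar\phi(\bar m)=0$, then $\phi(m)\in\ker_\infty(\phi)$, so $\phi^{k+1}(m)=0$ for some $k$, forcing $m\in\ker_\infty(\phi)$.

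Applying exactness to the short exact sequence $0\to(\ker_\infty(\phi))_{\phi}\to M_\phi\to N_{\bar\phi}\to 0$, and using that $(\ker_\infty(\phi))_\phi\otimes_{R[X]}R[X^{\pm1}]=0$ (every element of the submodule is killed by some power of $X$, hence becomes zero after inverting $X$), I obtain a canonical $R[X^{\pm 1}]$-isomorphism $M_\phi\otimes_{R[X]}R[X^{\pm1}]\cong N_{\bar\phi}\otimes_{R[X]}R[X^{\pm1}]$. It therefore suffices to analyze the right-hand side.

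Finally, write $R[X^{\pm1}]=\bigcup_{n\in\N} R[X]\cdot X^{-n}$ as a right $R[X]$-module. Identifying $R[X]\cdot X^{-n}$ with $R[X]$ via $p\cdot X^{-n}\mapsto p$, the inclusion $R[X]\cdot X^{-n}\hookrightarrow R[X]\cdot X^{-(n+1)}$ becomes multiplication by $X$, so $R[X^{\pm 1}]\cong\varinjlim\bigl(R[X]\xrightarrow{\cdot X}R[X]\xrightarrow{\cdot X}\cdots\bigr)$. Tensoring with $N_{\bar\phi}$ and using that $X$ acts on $N_{\bar\phi}$ as $\bar\phi$, I get
$$N_{\bar\phi}\otimes_{R[X]}R[X^{\pm1}]\;\cong\;\varinjlim\bigl(N_{\bar\phi}\xrightarrow{\bar\phi} N_{\bar\phi}\xrightarrow{\bar\phi}N_{\bar\phi}\xrightarrow{\bar\phi}\cdots\bigr)$$
in $\mod{R[X]}$, where the transition maps $\bar\phi$ are $R[X]$-linear because $\bar\phi$ commutes with itself. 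Since $\bar\phi$ is injective on $N$, every transition is a monomorphism, so this colimit is a direct union of copies of $N_{\bar\phi}$, as claimed. No step is deep; the main thing to watch is that all the identifications are genuinely $R[X]$-linear (not just $R$-linear), and in particular that the bonding maps in the direct system are $\bar\phi$ viewed as an endomorphism in $\mod{R[X]}$.
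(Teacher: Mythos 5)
Your proof is correct and follows essentially the same route as the paper: both describe $-\otimes_{R[X]}R[X^{\pm1}]$ as a direct limit along multiplication by $X$, i.e.\ along $\phi$. The only organizational difference is that the paper writes $M_\phi\otimes_{R[X]}R[X^{\pm1}]$ directly as $\varinjlim(M_\phi\xrightarrow{\phi}M_\phi\xrightarrow{\phi}\cdots)$ (via \cite[Prop.~1.10.18]{R}) and then observes that the canonical map from each copy of $M_\phi$ has kernel $\ker_\infty(\phi)$, whereas you first kill $\ker_\infty(\phi)$ using the vanishing $\ker_\infty(\phi)\otimes_{R[X]}R[X^{\pm1}]=0$ and then take the limit of copies of $N_{\bar\phi}$ with injective bonding maps, which makes the ``direct union'' statement immediate rather than a posteriori.
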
 
\begin{proof}
This fact is explained in \cite[Section 1.10]{R}; in fact $R[X^{\pm1}]$ is the central localization of $R[X]$ with respect to the multiplicative system $\{X^n: n \geq 0\}$. Let us give a sketch of how to view $M_\phi\otimes_{R[X]} R[X^{\pm1}]$ as described above. By \cite[Proposition 1.10.18]{R}, $M_\phi\otimes_{R[X]} R[X^{\pm1}]$ is isomorphic to the direct limit of the following directed system: for all $n\in\N$ let $M_n=\hom_{R[X]}(X^nR[X],M_\phi)\cong M_\phi$; furthermore, given $n\leq m$, we let 
$$\phi_{m,n}:M_n\to M_{m} \ \ \ (\alpha :X^nR[X]\to M)\mapsto (\alpha\restriction_{X^mR[X]}:X^mR[X]\to M)\,.$$ 
This corresponds to the following directed system of right $R[X]$-modules:
$$M_\phi\overset{\phi}{\to}M_\phi\overset{\phi}{\to}\cdots\overset{\phi}{\to}M_\phi\to \cdots .$$
Of course, the direct limit of the above directed system is the union of the images of the canonical maps from each copy of $M_\phi$ to the direct limit. It is not difficult to check that the kernel of such maps is exactly $\ker_\infty(\phi)$.
\end{proof}

\section{Algebraic entropies induced by non-discrete length functions}\label{length_fun_subs}
In what follows $L:{\rm Mod}(R) \to \R^*$ always denotes an arbitrary length function. 
The {\em algebraic $L$-entropy} $\ent_L$, as defined in \cite{SZ} and \cite{SVV}, is an invariant of the category of right $R[X]$-modules:
$$\ent_L:{\rm Mod}{R[X]}\to \R^*.$$
Let us recall its definition for the reader's convenience. If $N \in \Fin_L(M)$ and $n \in \N_{\geq 1}$, $T_n(\phi, N)=\sum_{i=0}^{n-1} \phi^i N$ denotes the {\em $n$-th $\phi$-trajectory of $N$}, and $T(\phi, N)=\sum_{i=0}^\infty \phi^i N$ its {\em $\phi$-trajectory}. Given a right $R[X]$-module $M_\phi$, we let
$$\ent_L(M_\phi)=\ent_L(\phi)= \sup \{ \ent_L(\phi, N) : N \in \Fin_L(M) \}\,,$$
where
\begin{equation}\label{eq_def_ent}\ent_L(\phi, N)=\lim_{n \to \infty} {L(T_n(\phi,N)}/{n}\,.\end{equation}
By the additivity of $L$, the sequence of real numbers $\{ L(T_n(\phi,N))\}_{n\in\N}$ is sub-additive and non-negative. Then, by the well-known Fekete Lemma (see \cite{F}), the limit \eqref{eq_def_ent} exists finite and it coincides with 
$$ \inf \left\{ {L(T_n(\phi,N)})/{n}:n\in\N \right\}\,.$$
In what follows we give an alternative formula to compute entropy which will be very useful in the rest of the paper. Let us first recall the following result from \cite{SZ}:

\begin{lemma}\label{SZ1} {\rm (Lemma 1.9 in \cite{SZ})}.
Let $M_\phi$ be a right $R[X]$-module and let $N \in \Fin_L(M)$. For all $n \geq 1$, let 
$$\alpha_n=L\left(\frac{T_{n+1}(\phi,N)}{T_{n}(\phi,N)}\right)\,.$$ 
Then the sequence
of non-negative real numbers $\{ \alpha_n \}_n$ is non-increasing, hence 
$\lim_{n\to \infty} \alpha_n = \inf_n \alpha_n$.
\end{lemma}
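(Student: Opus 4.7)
The plan is to exhibit, for every $n\geq 1$, a surjective $R$-homomorphism
$$\bar\phi_n:T_n(\phi,N)/T_{n-1}(\phi,N)\twoheadrightarrow T_{n+1}(\phi,N)/T_n(\phi,N),$$
induced by $\phi$. Once this is done, the additivity of $L$ applied to the short exact sequence $0\to\ker\bar\phi_n\to T_n/T_{n-1}\to T_{n+1}/T_n\to 0$ gives $\alpha_n\leq \alpha_{n-1}$, which is the desired monotonicity (rewritten with index shift, $\alpha_{m+1}\leq\alpha_m$ for $m\geq 1$). Non-negativity is immediate from the codomain $\R^*$ of $L$, and finiteness of every $\alpha_n$ follows because $L(T_n(\phi,N))\leq nL(N)<\infty$ (each $\phi^i N$ is a quotient of $N$, hence $L$-finite, and $L$ is subadditive on sums), so each $T_{n+1}/T_n$ is $L$-finite.

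The first step is the identity $T_{n+1}(\phi,N)=T_n(\phi,N)+\phi\, T_n(\phi,N)$ for $n\geq 1$. This follows directly from $T_n(\phi,N)=\sum_{i=0}^{n-1}\phi^i N$ (so $\phi\, T_n(\phi,N)=\sum_{i=1}^{n}\phi^i N$) together with the inclusion $N\subseteq T_n(\phi,N)$. Note that the usual defining recursion $T_{n+1}=N+\phi T_n$ is not quite what is needed; the symmetric form $T_{n+1}=T_n+\phi T_n$ is what makes the next step work.

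The second step is the construction of $\bar\phi_n$. Consider the composite $R$-homomorphism
$$\theta_n\colon T_n(\phi,N)\xrightarrow{\phi}\phi\, T_n(\phi,N)\hookrightarrow T_{n+1}(\phi,N)\twoheadrightarrow T_{n+1}(\phi,N)/T_n(\phi,N).$$
By Step 1, the image of $\phi\, T_n(\phi,N)$ generates $T_{n+1}(\phi,N)/T_n(\phi,N)$, so $\theta_n$ is surjective. Moreover $\phi\, T_{n-1}(\phi,N)=\sum_{i=1}^{n-1}\phi^i N\subseteq T_n(\phi,N)$, so $T_{n-1}(\phi,N)\subseteq\ker\theta_n$. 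Hence $\theta_n$ factors through $T_n/T_{n-1}$, yielding the desired surjection $\bar\phi_n$.

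The third step is just bookkeeping: additivity of $L$ on the short exact sequence arising from $\bar\phi_n$ gives
$$L(T_n/T_{n-1})=L(\ker\bar\phi_n)+L(T_{n+1}/T_n)\geq L(T_{n+1}/T_n),$$
i.e.\ $\alpha_n\leq\alpha_{n-1}$. Thus $\{\alpha_n\}$ is non-increasing, and being bounded below by $0$ it converges to $\inf_n\alpha_n$. There is no real obstacle here; the only point that requires attention is recognizing that one should apply $\phi$ to the whole of $T_n(\phi,N)$ (not just to $N$) and then exploit the symmetric recursion $T_{n+1}=T_n+\phi T_n$ to get surjectivity onto the quotient.
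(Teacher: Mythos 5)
Your proof is correct and is essentially the standard argument for this statement (the paper itself does not reprove it, but cites Lemma~1.9 of \cite{SZ}, whose proof proceeds exactly by exhibiting the $\phi$-induced surjection $T_n/T_{n-1}\twoheadrightarrow T_{n+1}/T_n$ and applying additivity of $L$). All the details you flag --- the symmetric recursion $T_{n+1}=T_n+\phi T_n$, the kernel inclusion $T_{n-1}\subseteq\ker\theta_n$, and the finiteness of each $\alpha_n$ via $L(T_n)\leq nL(N)<\infty$ --- are the right ones and are handled correctly.
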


In case $L$ is a discrete length function, from the preceding lemma one easily deduces that $\ent_L(\phi,N)= \inf_n \alpha_n= \alpha$, where $\alpha$ is the eventually constant value of the stationary sequence $\{ \alpha_n \}_n$ (see Proposition 1.10 in \cite{SZ}). We now extend this result to arbitrary length functions:

\begin{proposition}\label{non-discrete}
In the setting of Lemma \ref{SZ1}, $\ent_L(\phi,N)= \inf_n \alpha_n\ .$
\end{proposition}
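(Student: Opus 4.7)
The plan is to exploit the telescoping structure provided by additivity. By construction $T_n(\phi,N)\leq T_{n+1}(\phi,N)$, so the additivity of $L$ gives
$$L(T_{n+1}(\phi,N))=L(T_n(\phi,N))+\alpha_n,$$
and an immediate induction (starting from $T_1(\phi,N)=N$) yields
$$L(T_n(\phi,N))=L(N)+\sum_{i=1}^{n-1}\alpha_i\quad\text{for every }n\geq1.$$
All quantities are finite: since $\phi^i N$ is a homomorphic image of $N$, additivity gives $L(\phi^i N)\leq L(N)<\infty$, so $L(T_n(\phi,N))\leq nL(N)<\infty$ and in particular each $\alpha_n$ is finite (in fact $\alpha_n\leq\alpha_1\leq L(N)$).

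Dividing the displayed equation by $n$ and letting $n\to\infty$, the contribution $L(N)/n$ vanishes, so
$$\ent_L(\phi,N)=\lim_{n\to\infty}\frac{L(T_n(\phi,N))}{n}=\lim_{n\to\infty}\frac{\alpha_1+\cdots+\alpha_{n-1}}{n},$$
that is, $\ent_L(\phi,N)$ equals the Ces\`aro mean of the sequence $\{\alpha_n\}_n$. By Lemma~\ref{SZ1} this sequence is non-increasing and bounded below by $0$, hence it converges to its infimum $\alpha:=\inf_n\alpha_n$. A standard application of the Ces\`aro--Stolz theorem then shows that the Ces\`aro mean of a convergent sequence converges to the same limit, giving $\ent_L(\phi,N)=\alpha=\inf_n\alpha_n$, as required.

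There is essentially no obstacle here beyond noticing the right replacement for the discrete argument: in \cite{SZ} one used that, for discrete $L$, the non-increasing sequence $\{\alpha_n\}$ lies in a discrete subset of $\R_{\geq0}$ and therefore stabilises, so the Ces\`aro mean is trivially the eventual value. For a general length function stabilisation may fail, but mere convergence of $\{\alpha_n\}$ to $\inf_n\alpha_n$ is enough to invoke Ces\`aro and reach the same conclusion. The only technical point to keep in mind is the finiteness of all the $L$-values involved, which follows immediately from the hypothesis $N\in\Fin_L(M)$.
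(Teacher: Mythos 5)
Your proof is correct and is essentially the same as the paper's: both rest on the telescoping identity $L(T_n(\phi,N))=L(N)+\sum_{i=1}^{n-1}\alpha_i$ coming from additivity, together with the monotonicity of $\{\alpha_n\}$ from Lemma~\ref{SZ1}. The only difference is cosmetic: you recognise the quotient as a Ces\`aro mean and cite the standard Ces\`aro theorem, whereas the paper carries out the equivalent $\varepsilon$-argument by hand starting from a suitable $n_0$.
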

\begin{proof}
Let $\alpha= \inf_{n} \alpha_n$. For any $\varepsilon >0$ there exists an index $n_0$ such that 
$$L\left(\frac{T_{n+1}(\phi,N)}{T_{n}(\phi,N)}\right) < \alpha + \varepsilon\,, $$ 
for all $n \geq n_0$. By the additivity of $L$, one shows inductively that, for all $k\geq 1$,
$\displaystyle L(T_{n_0+k}(\phi,N))= L(T_{n_0}(\phi,N))+\sum_{i=0}^{k-1}L\left(\frac{T_{n_0+i+1}(\phi,N)}{T_{n_0+i}(\phi,N)}\right)$.
Thus,
$$L(T_{n_0}(\phi,N))+k \alpha\leq L(T_{n_0+k}(\phi,N))\leq L(T_{n_0}(\phi,N))+k (\alpha+ \epsilon)\,.$$
Using the above inequalities we obtain: 
$$\lim_{k \to \infty} \frac{L(T_{n_0+k}(\phi,N))}{n_0+k} \leq \lim_{k \to \infty} \frac{L(T_{n_0}(\phi,N))+k (\alpha+ \epsilon)}{n_0+k}= \alpha + \epsilon\,,$$
and
$$\lim_{k \to \infty} \frac{L(T_{n_0+k}(\phi,N))}{n_0+k} \geq \lim_{k \to \infty} \frac{L(T_{n_0}(\phi,N))+k \alpha}{n_0+k}= \alpha\ .$$
From these inequalities we deduce that $\ent_L(\phi,N)= \alpha $. \end{proof}

\begin{corollary}\label{coro_comp_with_traj}\label{entropia_con_potenza}
In the setting of Lemma \ref{SZ1} we have: 
\begin{enumerate}[\rm (1)]
\item $\ent_L(\phi,N)=\ent_L(\phi,T_n(\phi,N))$, for all $n \geq 1$;
\item $\ent_L(\phi,N)=\ent_L(\phi, \phi^n(N))$, for all $n\in\N$.
\end{enumerate}
\end{corollary}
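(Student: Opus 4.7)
The plan is to reduce both statements to elementary identities among $\phi$-trajectories, and then to invoke either the Fekete-limit definition of $\ent_L$ or the infimum formula from Proposition~\ref{non-discrete}.

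For part~(1), the first step is to unwind the definition:
$$T_m(\phi, T_n(\phi,N))=\sum_{j=0}^{m-1}\phi^j\Bigl(\sum_{i=0}^{n-1}\phi^i N\Bigr)=\sum_{k=0}^{n+m-2}\phi^k N=T_{n+m-1}(\phi,N).$$
With this identity in hand, Proposition~\ref{non-discrete} does the rest. Writing $\alpha_k=L(T_{k+1}(\phi,N)/T_k(\phi,N))$ as in Lemma~\ref{SZ1}, the analogous quantities with $T_n(\phi,N)$ in place of $N$ are precisely $\alpha_{n+m-1}$, i.e.\ the tail of $\{\alpha_k\}$ starting at index $n$. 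Since $\{\alpha_k\}$ is non-increasing by Lemma~\ref{SZ1}, every such tail has the same infimum as the full sequence, giving $\ent_L(\phi,T_n(\phi,N))=\inf_k \alpha_k=\ent_L(\phi,N)$.

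For part~(2), the starting point is the identity $T_m(\phi,\phi^n(N))=\phi^n(T_m(\phi,N))$, which again follows by expanding the definition. Since $\phi^n$ surjects $T_m(\phi,N)$ onto $T_m(\phi,\phi^n(N))$, additivity of $L$ applied to the induced short exact sequence yields $L(T_m(\phi,\phi^n(N)))\le L(T_m(\phi,N))$; dividing by $m$ and letting $m\to\infty$ gives $\ent_L(\phi,\phi^n(N))\le\ent_L(\phi,N)$. For the reverse inequality, I would use the decomposition $T_{n+m}(\phi,N)=T_n(\phi,N)+\phi^n(T_m(\phi,N))=T_n(\phi,N)+T_m(\phi,\phi^n(N))$ together with sub-additivity of $L$ to obtain $L(T_{n+m}(\phi,N))\le L(T_n(\phi,N))+L(T_m(\phi,\phi^n(N)))$. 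Dividing by $n+m$ and sending $m\to\infty$ makes the first summand vanish, while the second converges to $\ent_L(\phi,\phi^n(N))$.

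I do not anticipate any real obstacle: both parts are formal consequences of the sub-additive limit definition of $\ent_L$ combined with the two trajectory identities above. The only mildly subtle point is that $\phi^n$ need not be injective in part~(2), which is why the argument has to pass through additivity of $L$ on a short exact sequence rather than through an isomorphism $T_m(\phi,N)\cong T_m(\phi,\phi^n(N))$.
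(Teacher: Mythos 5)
Your proposal is correct and follows essentially the same route as the paper: part (1) via the non-increasing sequence $\{\alpha_k\}$ from Proposition~\ref{non-discrete}, and part (2) via the decomposition $T_{n+m}(\phi,N)=T_n(\phi,N)+T_m(\phi,\phi^n(N))$ followed by dividing by $n+m$ and sending $m\to\infty$. The only cosmetic difference is in the easy inequality of part (2): the paper passes through monotonicity $\ent_L(\phi,\phi^n(N))\leq\ent_L(\phi,T_n(\phi,N))$, while you argue directly that $\phi^n$ surjects $T_m(\phi,N)$ onto $T_m(\phi,\phi^n(N))$; both are one-line observations.
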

\begin{proof}
Part (1) is an easy consequence of Proposition \ref{non-discrete}. For part (2) just notice that $\ent_L(\phi,\phi^n(N))\leq \ent_L(\phi,T_n(\phi,N))=\ent_L(\phi,N)$. For the converse inequality we have, for any fixed $n \in \N$: 
\begin{align*}
\ent_L(\phi,F)&=\lim_{m\to\infty}\frac{L(T_m(\phi,F))}{m} \leq \lim_{n<m\to\infty}\frac{L(T_n(\phi,F))+L(T_{m-n}(\phi, \phi^n(F)))}{m}\\
&=\lim_{n<m\to\infty}\frac{L(T_n(\phi,F))}{m}+\frac{L(T_{m-n}(\phi, \phi^n(F)))}{m-n}\cdot\frac{m-n}{m}\\
&=\lim_{m\to\infty}\frac{L(T_{m}(\phi, \phi^n(F)))}{m}=\ent_L(\phi,\phi^n(F))\,.\qedhere\end{align*}
\end{proof}

Let us conclude this section with an example of computation of entropy for modules over (not necessarily commutative) domains, which shows that AT does not hold in general for $\ent_L$ on the whole category $\mod R$.

\begin{example}\label{nolocally}
Let $R$ be a domain and let $L: {\rm Mod }(R) \to \R^*$ be a non-trivial length function (i.e., $L$ takes some finite non-zero value). Assume that $\lFin(L)\neq \mod R$ (this happens for example if $L=\ell$ is the composition length for $R$ not Artinian, or $L=\log|-|$ in $\mod \Z$). In what follows we exhibit a short exact sequence
$$0\to N_{\phi\restriction_N}\to M_\phi \to (M/N)_{\bar \phi}\to 0$$
such that $\ent_L(\phi\restriction_N)=\ent_L(\phi)=0$ and $\ent_L(\bar \phi)>0$. Notice first that $L(R)=\infty$; in fact, otherwise all the finitely generated modules would have finite $L$-length, contradicting the inequality $\lFin(L)\neq \mod R$. Furthermore, since $R$ is a domain, any $R$-submodule $F\leq R^{(\N)}$ contains a copy of $R$, so that $L(F)=\infty$. 
Being $L$ non-trivial, one can find a right ideal $K\leq R$ such that $0<L(R/K)<\infty$.
Let $M_\phi=R\otimes_R R[X]=R[X]$ and notice that, since $\Fin_L(M)=0$, $\ent_L(\phi)=0$. Similarly, if $N_{\phi\restriction_N}=K\otimes_R R[X]$, then $\ent_L(\phi \restriction_N)=0$. Now $(M/N)_{\bar\phi}$ is isomorphic to $(R/K)\otimes_R R[X]$ (since $-\otimes_R R[X]$ is exact) and, proceeding as in \cite[Example 2.14]{SVV}, and using Lemma \ref{SZ1}, one can see that $\ent_L(\bar \phi)=L(R/K)>0$.

\end{example}

\section{The entropy $\ent_L$ is upper continuous}\label{upp_cont_sec}

Our goal in this subsection is to extend to possibly non-discrete length functions Proposition 2.12 in \cite{SVV}.  Before that, we need the following technical result.

\begin{lemma} \label{prop_base_dense}
Let $M_\phi$ be a right $R[X]$-module, let $\varepsilon$ be a positive real number, and let $N_0 \leq N \in\Fin_L(M)$ be such that $L(N)-L(N_0)<\varepsilon$. Then,
\begin{enumerate}[\rm (1)]
\item $L(\phi (N))-L(\phi (N_0))<\varepsilon$;
\item $L(T_n(\phi, N))-L(T_n(\phi ,N_0))<n\varepsilon$, for all $n \geq 1$;
\item $\ent_L(\phi,N)-\ent_L(\phi,N_0)\leq \varepsilon$.
\end{enumerate}
\end{lemma}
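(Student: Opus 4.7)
The plan is to establish (1) directly from additivity, bootstrap to (2) by a telescoping argument along a chain that interpolates between $T_n(\phi,N_0)$ and $T_n(\phi,N)$, and then deduce (3) from (2) by dividing by $n$ and passing to the limit.

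For (1), the key observation is that $\phi$ induces a surjective $R$-module map $N/N_0 \twoheadrightarrow \phi(N)/\phi(N_0)$. Additivity of $L$ applied to the short exact sequence $0\to K\to P\to Q\to 0$ says $L(Q)\leq L(P)$ whenever $Q$ is a quotient of $P$. Combining this with additivity applied to $0\to \phi(N_0)\to \phi(N)\to \phi(N)/\phi(N_0)\to 0$ and $0\to N_0\to N\to N/N_0\to 0$ yields
\[
L(\phi(N))-L(\phi(N_0)) \;=\; L(\phi(N)/\phi(N_0)) \;\leq\; L(N/N_0) \;=\; L(N)-L(N_0) \;<\;\varepsilon.
\]

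For (2), iterate (1) to get $L(\phi^k(N))-L(\phi^k(N_0))<\varepsilon$ for each $k\geq 0$, and then interpolate. Set
\[
B_k \;=\; \sum_{i=0}^{k-1}\phi^i(N) \;+\; \sum_{i=k}^{n-1}\phi^i(N_0),
\]
so that $B_0=T_n(\phi,N_0)$, $B_n=T_n(\phi,N)$, and $B_{k+1}=B_k+\phi^k(N)$. By the second isomorphism theorem,
\[
B_{k+1}/B_k \;\cong\; \phi^k(N)\big/\bigl(B_k\cap\phi^k(N)\bigr),
\]
which is a quotient of $\phi^k(N)/\phi^k(N_0)$ since $\phi^k(N_0)\leq B_k\cap\phi^k(N)$. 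Hence $L(B_{k+1})-L(B_k)\leq L(\phi^k(N))-L(\phi^k(N_0))<\varepsilon$, and telescoping over $k=0,\ldots,n-1$ gives $L(T_n(\phi,N))-L(T_n(\phi,N_0))<n\varepsilon$.

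For (3), divide the inequality from (2) by $n$; both limits $\ent_L(\phi,N)=\lim_n L(T_n(\phi,N))/n$ and $\ent_L(\phi,N_0)=\lim_n L(T_n(\phi,N_0))/n$ exist by Fekete's lemma (as recalled in Section~\ref{length_fun_subs}), so passing to the limit yields $\ent_L(\phi,N)-\ent_L(\phi,N_0)\leq\varepsilon$. I expect no real obstacle here: everything follows from additivity together with the elementary quotient inequality, and the only point requiring some bookkeeping is the choice of the interpolating chain $\{B_k\}$ in step (2), which is tailored precisely so that successive quotients become quotients of $\phi^k(N)/\phi^k(N_0)$ rather than something larger.
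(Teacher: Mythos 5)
Your proof is correct and uses essentially the same ideas as the paper; part (1) and part (3) are verbatim what the paper does. For part (2) the paper argues by induction on $n$: it passes from $T_n$ to $T_{n+1}$ by fitting $T_{n+1}(\phi,N)/T_{n+1}(\phi,N_0)$ into a short exact sequence whose kernel is a quotient of $\phi^n(N)/\phi^n(N_0)$ (contributing $<\varepsilon$ by (1)) and whose cokernel is a quotient of $T_n(\phi,N)/T_n(\phi,N_0)$ (contributing $<n\varepsilon$ by the inductive hypothesis). You instead unwind the induction into an explicit filtration $T_n(\phi,N_0)=B_0\leq B_1\leq\cdots\leq B_n=T_n(\phi,N)$ obtained by replacing the summands $\phi^k(N_0)$ with $\phi^k(N)$ one at a time, and telescope using $L(B_{k+1}/B_k)\leq L(\phi^k(N)/\phi^k(N_0))<\varepsilon$. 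The content is the same; your version is arguably a bit cleaner since it makes the $n$ pieces of size $<\varepsilon$ visible at once rather than hiding $n-1$ of them inside an inductive call, at the cost of having to check that $\phi^k(N_0)\leq B_k\cap\phi^k(N)$ for each $k$ (which does hold, since $\phi^k(N_0)$ is one of the summands defining $B_k$ for $0\leq k\leq n-1$).
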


\begin{proof}
(1) Notice that there is an epimorphism $N/N_0\to \phi(N)/\phi(N_0)$. 
Then, 
$$L(\phi(N))-L(\phi(N_0))=L(\phi(N)/\phi(N_0))\leq L(N/N_0)=L(N)-L(N_0)<\varepsilon\,.$$

\smallskip\noindent
(2) 
For $n=1$ the claim is part of our hypotheses. Assume now that the claim holds for some $n$ and let us verify it for $n+1$. Consider the following exact sequences
$$\frac{T_{n}(\phi,N)}{T_{n}(\phi,N_0)}\to\frac{T_{n+1}(\phi,N)}{T_{n+1}(\phi,N_0)+\phi^n(N)}\to 0\,,$$
$$ \frac{\phi^n(N)}{\phi^n(N_0)}\to \frac{\phi^n(N)}{\phi^n(N)\cap T_{n+1}(\phi,N_0)}\to 0\,.$$
Using the inductive hypothesis and the first sequence above one can see that 
$$L\left(\frac{T_{n+1}(\phi,N)}{T_{n+1}(\phi,N_0)+\phi^n(N)}\right)<n\varepsilon\,,$$
while using part (1) and the second exact sequence one obtains that
$$L\left(\frac{\phi^n(N)}{\phi^n(N)\cap T_{n+1}(\phi,N_0)}\right)<\varepsilon\,.$$
Consider now the following short exact sequence:
$$0\to \frac{\phi^n(N)}{\phi^n(N)\cap T_{n+1}(\phi,N_0)}\to \frac{T_{n+1}(\phi,N)}{T_{n+1}(\phi,N_0)}\to \frac{T_{n+1}(\phi,N)}{T_{n+1}(\phi,N_0)+\phi^n(N)}\to0\ .$$
By the additivity of $L$ and the above computations we obtain  
$$L\left(\frac{T_{n+1}(\phi,N)}{T_{n+1}(\phi,N_0)}\right)<(n+1)\varepsilon\  ,$$
that is, $L(T_{n+1}(\phi,N))-L(T_{n+1}(\phi,N_0))<(n+1)\varepsilon$.

\smallskip\noindent
(3) follows by (2) just dividing by $n$ and passing to the limit.
\end{proof}

We can now prove the main result of this section.

\begin{proposition}\label{sanatoria}
Let $M_\phi$ be a right $R[X]$-module. Then,
$$\ent_L(\phi)= \sup \{ \ent_L(\phi, F) : F \in \mathcal F (M)\cap \Fin_L(M) \}.$$
\end{proposition}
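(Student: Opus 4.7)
The inequality $\ent_L(\phi) \geq \sup\{\ent_L(\phi,F) : F \in \mathcal F(M)\cap \Fin_L(M)\}$ is immediate from the definition of $\ent_L(\phi)$, since finitely generated $L$-finite submodules are a subfamily of $\Fin_L(M)$. So the plan is to establish the reverse inequality: for every $N \in \Fin_L(M)$, show that $\ent_L(\phi,N)$ is a limit (or at least a supremum) of values $\ent_L(\phi,F)$ with $F \in \mathcal F(M) \cap \Fin_L(M)$, $F \leq N$.

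The key tool is Lemma \ref{fin_gen_chain}, which approximates any $L$-finite module from below by a countable ascending chain of finitely generated submodules whose $L$-values converge to $L(N)$. Given $N \in \Fin_L(M)$ and an arbitrary $\varepsilon > 0$, pick such a chain $\{N_n\}$ and choose $n$ large enough that $L(N) - L(N_n) < \varepsilon$. Each $N_n$ is a finitely generated submodule of $N$, hence lies in $\mathcal F(M) \cap \Fin_L(M)$.

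Now apply Lemma \ref{prop_base_dense}(3) to the pair $N_n \leq N$: this gives
$$\ent_L(\phi, N) - \ent_L(\phi, N_n) \leq \varepsilon.$$
Consequently,
$$\ent_L(\phi, N) \leq \ent_L(\phi, N_n) + \varepsilon \leq \sup\{\ent_L(\phi, F) : F \in \mathcal F(M)\cap \Fin_L(M)\} + \varepsilon.$$
Letting $\varepsilon \to 0$ and then taking the supremum over $N \in \Fin_L(M)$ yields the desired inequality.

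The whole argument is very short once Lemmas \ref{fin_gen_chain} and \ref{prop_base_dense} are in hand; indeed, this proposition is essentially a packaging of those two lemmas. The conceptual point — and the only place where the non-discrete case genuinely differs from the discrete one — is that in the discrete setting one can exhibit a single finitely generated $N' \leq N$ with $L(N')=L(N)$ exactly, whereas here one must allow approximation up to $\varepsilon$ and invoke the quantitative estimate of Lemma \ref{prop_base_dense}(3) to transfer an $L$-closeness between submodules into an $\ent_L$-closeness between their entropies. No obstacle is expected beyond this, since Lemma \ref{prop_base_dense} has already absorbed the combinatorial work of propagating the $\varepsilon$-bound along trajectories.
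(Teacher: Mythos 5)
Your proposal is correct and follows essentially the same route as the paper: approximate $N$ from below by a finitely generated submodule $N_k$ with $L(N)-L(N_k)<\varepsilon$ via Lemma \ref{fin_gen_chain}, then transfer this to $\ent_L(\phi,N)-\ent_L(\phi,N_k)\leq\varepsilon$ using Lemma \ref{prop_base_dense}(3).
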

\begin{proof}
Let $N\in\Fin_L(M)$. By Lemma \ref{fin_gen_chain}, there is an ascending chain $0=N_0 \leq N_1 \leq \ldots \leq N_k \leq \ldots $ of finitely generated submodules of $N$ such that $L(N)=\sup_{k\in\N}L(N_k)$. For all $\varepsilon>0$ there exists $k\in\N$ such that $L(N)-L(N_k)<\varepsilon$ and so, by Lemma \ref{prop_base_dense}, $\ent_L(\phi,N)-\ent_L(\phi,N_k)\leq \varepsilon$. This shows that 
$\ent_L(\phi,N)=\sup_{k\in \N}\ent_L(\phi,N_k)$, and the conclusion immediately follows.
\end{proof}

As an immediate consequence we derive the following corollary (for a proof of the second part, see \cite[Proposition 8]{V1}):

\begin{corollary} \label{upp_cont}
The algebraic entropy $\ent_L$ is an upper continuous invariant of $\mod {R[X]}$. In particular, if a right $R[X]$-module $M_\phi$ is the direct union of a directed system of $R[X]$-submodules $\{(N_\alpha)_{\phi_\alpha}: \alpha\in \Lambda\}$, then 
$$\ent_L(M_\phi)=\sup_{\alpha\in\Lambda}\ent_L((N_\alpha)_{\phi_\alpha})\,.$$
\end{corollary}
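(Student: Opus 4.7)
The plan is to derive both assertions directly from Proposition \ref{sanatoria}; the direct union formula is the content, and upper continuity falls out as a special case.

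To prove the direct union formula, write $\phi_\alpha$ for the restriction $\phi\restriction_{N_\alpha}$, well defined since each $N_\alpha$ is $\phi$-invariant. Any $F \in \F(M) \cap \Fin_L(M)$ is generated by finitely many elements which, by directedness of $\{N_\alpha\}$, all lie in some $N_\alpha$; hence $F \in \F(N_\alpha) \cap \Fin_L(N_\alpha)$. Because $N_\alpha$ is $\phi$-invariant, the trajectories $T_n(\phi, F)$ are computed identically inside $M$ or inside $N_\alpha$, and the value of $L$ on them does not depend on the ambient module, so $\ent_L(\phi, F) = \ent_L(\phi_\alpha, F)$. Conversely, every $F \in \F(N_\alpha) \cap \Fin_L(N_\alpha)$ obviously belongs to $\F(M) \cap \Fin_L(M)$. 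Applying Proposition \ref{sanatoria} to $M_\phi$ and to each $(N_\alpha)_{\phi_\alpha}$, I then write
\begin{align*}
\ent_L(M_\phi) &= \sup\{\ent_L(\phi, F) : F \in \F(M) \cap \Fin_L(M)\} \\
&= \sup_{\alpha \in \Lambda} \sup\{\ent_L(\phi_\alpha, F) : F \in \F(N_\alpha) \cap \Fin_L(N_\alpha)\} \\
&= \sup_{\alpha \in \Lambda} \ent_L((N_\alpha)_{\phi_\alpha}).
\end{align*}

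For upper continuity of $\ent_L$ as an invariant of $\mod{R[X]}$, I would apply the direct union formula just established to the directed family of all finitely generated $R[X]$-submodules of $M_\phi$; each such submodule has the form $T(\phi, F)$ for some $F \in \F(M)$, and their union is $M_\phi$. This yields $\ent_L(M_\phi) = \sup\{\ent_L(K_\phi) : K \in \F(M_\phi)\}$, which is exactly the upper continuity property. (Equivalently, one could quote \cite[Proposition 8]{V1} to pass from upper continuity to the direct union statement; the two are essentially equivalent in this setting.)

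The only mildly delicate point in the whole argument is the second equality in the displayed chain, i.e.\ the interchange of the two suprema, which reduces to the set-theoretic equality $\F(M) \cap \Fin_L(M) = \bigcup_\alpha (\F(N_\alpha) \cap \Fin_L(N_\alpha))$ noted above together with the intrinsic character of $\ent_L(\phi, F)$. Everything else is a mechanical bookkeeping exercise, and no obstacle specific to the non-discreteness of $L$ arises here: that difficulty has already been absorbed into Proposition \ref{sanatoria}.
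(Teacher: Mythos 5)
Your proof is correct and rests on the same engine as the paper's, namely Proposition \ref{sanatoria}. The paper treats upper continuity as the immediate consequence of Proposition \ref{sanatoria} and then invokes \cite[Proposition 8]{V1} to pass to the direct-union formula; you reverse this order, proving the direct-union formula directly (by observing that every $F\in\F(M)\cap\Fin_L(M)$ lands in some $N_\alpha$ by directedness, and that $\ent_L(\phi,F)$ is intrinsic to any $\phi$-invariant submodule containing $F$) and then recovering upper continuity by specialising to the directed family of finitely generated $R[X]$-submodules of $M_\phi$. This buys you a self-contained argument that does not appeal to V\'amos, at the cost of a little extra bookkeeping; the mathematical content is the same.
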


Another consequence of Proposition \ref{sanatoria} is the following:

\begin{corollary}\label{gen_da_fin}
Let $M_\phi$ be a right $R[X]$-module and suppose that $M=T(\phi,N)$ for some $N\in\Fin_L(M)$. Then, $\ent_L(\phi)=\ent_L(\phi,N)$.
\end{corollary}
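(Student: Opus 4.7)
The inequality $\ent_L(\phi,N)\leq \ent_L(\phi)$ is immediate from the definition of $\ent_L(\phi)$ as a supremum, so the entire content of the corollary is the reverse inequality. The plan is to exploit the hypothesis $M=T(\phi,N)=\sum_{i\geq 0}\phi^i N$ to control every finitely generated $L$-finite submodule of $M$ by a trajectory of $N$.

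By Proposition \ref{sanatoria}, it suffices to show that $\ent_L(\phi,F)\leq \ent_L(\phi,N)$ for every $F\in\mathcal F(M)\cap\Fin_L(M)$. Fix such an $F$ and let $x_1,\dots,x_r$ be generators. Since $M=\bigcup_{n\geq 1}T_n(\phi,N)$ is an ascending union, each $x_j$ lies in some $T_{n_j}(\phi,N)$, and taking $n=\max_j n_j$ gives $F\subseteq T_n(\phi,N)$. By the obvious monotonicity of the trajectories ($T_m(\phi,-)$ is inclusion-preserving) and of $L$, we get
$$\ent_L(\phi,F)\leq \ent_L(\phi,T_n(\phi,N)).$$
Now Corollary \ref{coro_comp_with_traj}(1) gives $\ent_L(\phi,T_n(\phi,N))=\ent_L(\phi,N)$, so $\ent_L(\phi,F)\leq \ent_L(\phi,N)$, and taking the supremum over $F$ and invoking Proposition \ref{sanatoria} yields $\ent_L(\phi)\leq \ent_L(\phi,N)$.

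There is no real obstacle here: the result is a direct corollary that packages together (i) upper continuity in the form of Proposition \ref{sanatoria}, which allows us to restrict to finitely generated $L$-finite submodules, and (ii) the trajectory-invariance of entropy from Corollary \ref{coro_comp_with_traj}(1). The only point to verify carefully is the ``trapping'' step $F\subseteq T_n(\phi,N)$, which uses precisely that $F$ is finitely generated; without this finite generation hypothesis one could not absorb $F$ into a single finite trajectory, and the argument would break down.
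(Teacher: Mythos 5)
Your proof is correct and follows essentially the same route as the paper's: reduce to finitely generated $L$-finite submodules via Proposition \ref{sanatoria}, trap each such $F$ inside a finite trajectory $T_n(\phi,N)$ using finite generation, and conclude with the trajectory-invariance from Corollary \ref{coro_comp_with_traj}(1). The elaboration of the trapping step and the monotonicity remark are just spelled-out versions of what the paper leaves implicit.
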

\begin{proof}
By hypothesis, $M=\bigcup_{n \geq 1}T_n(\phi,N)$ and so, for all $F\in \F(M)\cap \Fin_L(M)$ there exists $n\geq 1$ such that $F$ is contained into $T_n(\phi,N)$. Hence $\ent_L(\phi,F)\leq \ent_L(\phi,T_n(\phi,N))=\ent_L(\phi,N)$, by Corollary \ref{coro_comp_with_traj}, and the claim follows.
\end{proof}

We illustrate now the main source of examples of non-discrete length functions $L$ and of the induced entropies $\ent_L$. For this purpose, we resume Example 2.2 of \cite{SZ}, slightly changing the notation.  Let $R$ be a non-discrete archimedean (i.e., rank one) valuation domain, with value group $\Gamma(R)$ isomorphic to a dense subgroup of $\R$. Let $v: Q \to \R \cup \{ \infty \}$ denote the valuation on the field of quotients $Q$ of $R$. According to \cite{NR}, a non-discrete length function $L_v: {\rm Mod} (R) \to \R^*$ is defined and uniquely identified by setting $L_v (R/I)= \inf \{ v(a) : a \in I \}$ for $I$ a non-zero ideal of $R$, and  $L_v (R)= \infty$. Since any positive real number $r$ can be reached as $\inf \{ v(a) : a \in I \}$ for a suitable ideal $I$ of $R$, the length function $L_v$ is non-discrete. We don't care whether $R$ is almost maximal, an assumption made in Example 2.2 of \cite{SZ} which is not relevant for our purposes. Notice that, if $P$ is the maximal ideal of $R$, then $L_v(R/P)=0$, hence all the semi-Artinian $R$-modules have zero $L_v$-length. 

Using the above notation, we take up again, in the next Example \ref{ex2.2}, the last part of Example 2.2 of \cite{SZ}, where the entropy $\ent_{L_v}(\phi)$ was computed using the formula $\ent_{L_v}(\phi)= \sup \{ \ent_{L_v}(\phi, F) : F \in \mathcal F (M) \}$; note that this formula is validated by Proposition \ref{sanatoria}. Example \ref{ex2.2} provides a negative answer to the following question: 
\begin{quotation}
Given a length function $L$, is it true that, if an endomorphism $\phi: M \to M$ satisfies $\ent_L(\phi)=0$, then all the cyclic trajectories $T(\phi, Rx)$ ($x \in M$) have finite $L$-length?
\end{quotation}
The answer is positive if $L$ is discrete, since $L(T(\phi, Rx))=L(T_n(\phi, Rx))$ for some integer $n$. Note that the converse implication is always true. 

\begin{example}\label{ex2.2}
Let $0=I_0 < I_1 < I_2 < \ldots < I_n < \ldots$ be an ascending sequence of ideals of the non-discrete archimedean valuation domain $R$ such that  $L_v(R/I_n)=\sqrt n - \sqrt {n-1}$ for each $n \geq 1$. Then $ \sum_{1 \leq j \leq n} L_v(R/I_j)= \sqrt n$ for each $n$. Let $M=\bigoplus_{n\geq 1} x_nR$, where $x_nR \cong R/I_n$ for all $n\geq 1$. In Example 2.2 of \cite{SZ} it was shown that, for every endomorphism $\phi$ of $M$, $\ent_{L_v}(\phi)=0$. Now, if $\phi$ is the endomorphism defined by setting $\phi(x_n)=x_{n+1}$ for all $n$, the trajectory $T(\phi,x_1R)$ coincides with $M$, and $L_v(M)= \sup_n \sum_{j \leq n} L_v(R/I_j)= \sup_n \sqrt n = \infty.$
\end{example}

\section{The Addition Theorem for $\ent_L$}
Let $L: {\rm Mod}(R) \to \R^*$ be a length function. In what follows we denote by $\lFin_L[X]$  and $\lFin_L[X^{\pm1}]$ the full subcategories of ${\rm Mod}(R[X])$ and ${\rm Mod}(R[X^{\pm1}])$, respectively, consisting of the modules $M_\phi$ such that $M \in \lFin(L)$. 
In this section we prove that the restriction of $\ent_L$ to $\lFin_L[X]$ is additive and so, by Corollary \ref{upp_cont}, it is a length function.  Our proof splits into two parts. In Subsection 5.1 we prove the additivity of $\ent_L$ on $\lFin_L[X^{\pm1}]$, and then, in Subsection 5.2, we show that the additivity on $\lFin_L[X^{\pm1}]$ implies additivity on $\lFin_L[X]$.

\subsection{Additivity in $\mod {R[X^{\pm 1}]}$}\label{inv_add_subs}

If $\phi$ is an automorphism, we have the nice formula displayed in the next Proposition \ref{nolimit}, which allows us to escape the limit calculation in the computation of $\ent_L$, and to give a direct proof of AT for locally $L$-finite modules in a general setting. This formula extends a similar formula for Abelian groups (see \cite{DG1}) and its counterpart for the intrinsic algebraic entropy proved in \cite{GV}.

\begin{lemma}\label{nolimit_lemma}
Let $M_\phi$ be a right $R[X^{\pm 1}]$-module and let $F\in \Fin_L(M)$. Then,
$$\ent_L(\phi,F)=L\left(\frac{T(\phi^{-1},F)}{\phi^{-1}T(\phi^{-1},F)}\right)\,.$$
\end{lemma}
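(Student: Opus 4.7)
The plan is to use Proposition \ref{non-discrete} to reduce the limit defining $\ent_L(\phi,F)$ to a single $L$-value, and then exploit the fact that $\phi$ is invertible to translate the successive quotients $T_{n+1}(\phi,F)/T_n(\phi,F)$ into quotients involving $\phi^{-1}$-trajectories inside $F$.

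First I would apply Proposition \ref{non-discrete} to write
$$\ent_L(\phi,F) \;=\; \inf_{n\geq 1} L\!\left(\frac{T_{n+1}(\phi,F)}{T_n(\phi,F)}\right),$$
and then simplify each term. Since $T_{n+1}(\phi,F)=T_n(\phi,F)+\phi^n(F)$, the second isomorphism theorem yields
$$\frac{T_{n+1}(\phi,F)}{T_n(\phi,F)} \;\cong\; \frac{\phi^n(F)}{\phi^n(F)\cap T_n(\phi,F)}.$$
Now I would use the hypothesis that $\phi$ is an automorphism: the isomorphism $\phi^{-n}$ shows that $L$ is unchanged when we apply $\phi^{-n}$ to numerator and denominator, so the above expression has the same $L$-value as $F/(F\cap \phi^{-n}T_n(\phi,F))$. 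A direct computation gives $\phi^{-n}T_n(\phi,F)=\sum_{i=1}^{n}\phi^{-i}(F)=\phi^{-1}T_n(\phi^{-1},F)$. Putting $G_n=F\cap \phi^{-1}T_n(\phi^{-1},F)$, we have obtained
$$\ent_L(\phi,F) \;=\; \inf_{n\geq 1} L(F/G_n).$$

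The sequence $\{G_n\}_{n\geq 1}$ is an ascending chain of submodules of $F$ (since $T_n(\phi^{-1},F)\subseteq T_{n+1}(\phi^{-1},F)$), whose union is $G_\infty = F\cap \phi^{-1}T(\phi^{-1},F)$. Since $F\in\Fin_L(M)$, Lemma \ref{prep} applies and gives
$$\inf_{n}L(F/G_n)=\lim_{n\to\infty}L(F/G_n)=L(F/G_\infty)=L\!\left(\frac{F}{F\cap \phi^{-1}T(\phi^{-1},F)}\right).$$
A final application of the second isomorphism theorem, using $T(\phi^{-1},F)=F+\phi^{-1}T(\phi^{-1},F)$, rewrites this last quotient as $T(\phi^{-1},F)/\phi^{-1}T(\phi^{-1},F)$, which is the claimed formula.

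The only place where real care is needed is the translation step, i.e.\ recognizing the algebraic identity $\phi^{-n}T_n(\phi,F)=\phi^{-1}T_n(\phi^{-1},F)$ and that applying the automorphism $\phi^{-n}$ leaves $L$ invariant on the quotient. Everything else is a direct combination of Proposition \ref{non-discrete} (to get rid of the limit), Lemma \ref{prep} (to pass the limit inside $L$ along the ascending chain $G_n \nearrow G_\infty$, which uses $L$-finiteness of $F$), and two uses of the isomorphism theorem at the beginning and at the end.
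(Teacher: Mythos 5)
Your proof is correct and follows essentially the same route as the paper's: you convert the limit into an $\inf$ (via Proposition \ref{non-discrete}), translate the successive quotients by the automorphism $\phi^{-n}$ to express them as $F/F_n$ with $F_n=F\cap\phi^{-1}T_n(\phi^{-1},F)$, and pass to the limit using Lemma \ref{prep}. The only cosmetic difference is that you apply the second isomorphism theorem before acting by $\phi^{-n}$, whereas the paper does it in the reverse order.
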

\begin{proof}
For each $n \geq 1$ let $F_n=F \cap \phi^{-1}(T_n(\phi^{-1},F))$, and $F_\infty=\bigcup_{n}F_n=F \cap \phi^{-1}(T(\phi^{-1},F)$. It is not difficult to see that  
$$\frac{F}{F_n} \cong \frac{T_{n+1}(\phi^{-1},F)}{\phi^{-1}(T_{n}(\phi^{-1},F))} \ \ \ \text{and}\ \ \ \frac{F}{F_\infty} \cong \frac{T(\phi^{-1},F)}{\phi^{-1}(T(\phi^{-1},F))}\,.$$
The claim now follows from the following series of equalities:
\begin{align*}
\ent_L(\phi,F)&\overset{(*)}{=}\lim_{n \to \infty} L\left(\frac{T_{n+1}(\phi,F)}{T_{n}(\phi,F)}\right)=\lim_{n \to \infty} L\left(\frac{\phi^{-n}(T_{n+1}(\phi,F))}{\phi^{-n}(T_{n}(\phi,F))}\right)\\
&=\lim_{n \to \infty} L\left(\frac{T_{n+1}(\phi^{-1},F)}{\phi^{-1}(T_{n}(\phi^{-1},F))}\right)\\
&=\lim_{n \to \infty} L(F/F_n)\overset{(**)}{=}L(F/F_{\infty})=L\left(\frac{T(\phi^{-1},F)}{\phi^{-1}(T(\phi^{-1},F))}\right)\,,
\end{align*}
where $(*)$ holds by Lemma \ref{SZ1} and $(**)$ uses Lemma \ref{prep}. 
\end{proof}

\begin{proposition} \label{nolimit}
Let $M_\phi\in \lFin_L[X^{\pm 1}]$. Then
$$\ent_L(\phi)=\sup\{L(N/\phi^{-1}N) : N=T(\phi^{-1},N),\ L(N/\phi^{-1}N)<\infty\}\,.$$
\end{proposition}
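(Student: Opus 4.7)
The plan is to prove the equality by showing both inequalities, using Lemma~\ref{nolimit_lemma} as the main bridge between $\ent_L(\phi,F)$ and expressions of the form $L(N/\phi^{-1}N)$.

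For the inequality $\ent_L(\phi)\le S$ (where $S$ denotes the supremum on the right-hand side), I would start from Proposition~\ref{sanatoria}, which reduces the computation of $\ent_L(\phi)$ to a supremum over $F\in\F(M)\cap\Fin_L(M)$. For each such $F$, set $N_F=T(\phi^{-1},F)=\sum_{i\ge 0}\phi^{-i}F$. Since applying $\phi^{-1}$ just shifts the sum, $N_F$ is $\phi^{-1}$-stable, hence $T(\phi^{-1},N_F)=N_F$. Lemma~\ref{nolimit_lemma} then gives $\ent_L(\phi,F)=L(N_F/\phi^{-1}N_F)$, and the inequality $\ent_L(\phi,F)\le L(F)<\infty$ (from subadditivity of $L(T_n(\phi,F))$) guarantees that $N_F$ is an admissible competitor for the supremum defining $S$. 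Taking the sup over $F$ proves the direction.

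For the reverse inequality $\ent_L(\phi)\ge S$, fix an $N$ with $T(\phi^{-1},N)=N$ and $L(N/\phi^{-1}N)<\infty$, and an $\varepsilon>0$. Upper continuity of $L$ applied to the quotient $N/\phi^{-1}N$ produces a finitely generated $K\le N$ such that
$$L\bigl((K+\phi^{-1}N)/\phi^{-1}N\bigr)>L(N/\phi^{-1}N)-\varepsilon.$$
Because $M\in\lFin(L)$, such a $K$ lies in $\Fin_L(M)$, so Lemma~\ref{nolimit_lemma} applies to $K$. Setting $K'=T(\phi^{-1},K)$, one has $K'\subseteq N$ (since $N$ is $\phi^{-1}$-stable) and $\phi^{-1}K'\subseteq K'\cap\phi^{-1}N$. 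A short diagram chase then yields
$$\ent_L(\phi,K)=L(K'/\phi^{-1}K')\ge L\bigl(K'/(K'\cap\phi^{-1}N)\bigr)=L\bigl((K'+\phi^{-1}N)/\phi^{-1}N\bigr),$$
and the last quantity dominates $L((K+\phi^{-1}N)/\phi^{-1}N)$ because $K\subseteq K'$. Letting $\varepsilon\to 0$ and taking the sup over $N$ gives $\ent_L(\phi)\ge S$.

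The only delicate point is the second half: one must recognize that a generic $\phi^{-1}$-invariant $N$ with finite $L(N/\phi^{-1}N)$ need not itself be $L$-finite, so one cannot directly insert $N$ as a trajectory-base in the definition of entropy. The trick is to approximate the quotient $N/\phi^{-1}N$ (which \emph{is} $L$-finite) by finitely generated pieces via upper continuity, and then replace the finitely generated piece $K$ by its $\phi^{-1}$-trajectory $K'$ inside $N$ — the latter is again $L$-finite (its image in $N/\phi^{-1}N$ is, and the kernel of that map, $K'\cap\phi^{-1}N$, sits inside the $L$-finite module $K'$ only through estimates, so one should invoke Lemma~\ref{nolimit_lemma} directly on $K$ rather than on $K'$, which avoids having to argue that $K'$ itself is $L$-finite). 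This is the step requiring the most care.
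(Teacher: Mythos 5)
Your proof is correct and follows essentially the same route as the paper: both directions hinge on Lemma~\ref{nolimit_lemma}, and for the $\geq$ inequality both you and the paper approximate the $L$-finite quotient $N/\phi^{-1}N$ by finitely generated pieces (you invoke upper continuity directly, the paper invokes Lemma~\ref{fin_gen_chain}, which is the same device) and then apply Lemma~\ref{nolimit_lemma} to the finitely generated lift $K$ rather than to its trajectory $K'$. Your appeal to Proposition~\ref{sanatoria} in the $\leq$ direction is unnecessary — the paper's argument works for any $F\in\Fin_L(M)$, not just finitely generated ones — but it is harmless.
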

\begin{proof}
The inequality $\leq$ follows by Lemma \ref{nolimit_lemma}. On the other hand, let $N\leq M$ be such that $N=T(\phi^{-1},N)$ (equivalently, $\phi^{-1}N \leq N$)  and $L(N/\phi^{-1}N)<\infty$. By Lemma \ref{fin_gen_chain}, we can find a sequence $\bar F_1\subseteq \bar F_2\subseteq \ldots \subseteq \bar F_n\subseteq \ldots$ of finitely generated submodules of $N/\phi^{-1}N$ such that, for all $n\geq 1$:
$$L(N/\phi^{-1}N)-L(\bar F_n)<{1}/{n}\,.$$
For all $n \geq1$ choose a finitely generated (hence in $\Fin(L)$) submodule $F_n$ of $N$ such that $(F_n+\phi^{-1}N)/\phi^{-1}N=\bar F_n$. By Lemma \ref{nolimit_lemma} and using the inclusion $T(\phi^{-1},F_n)\subseteq N$, we get for all $n \geq 1$
\begin{align*}
\ent_L(\phi,F_n)&=L\left(\frac{T(\phi^{-1},F_n)}{\phi^{-1}(T(\phi^{-1},F_n))}\right)\geq L\left(\frac{F_n}{F_n\cap \phi^{-1}N}\right)>L\left(\frac{N}{\phi^{-1}N}\right)-\frac{1}{n}\,.
\end{align*}
Taking the supremum for $n\geq 1$ we obtain $\ent_L(\phi)\geq L(N/\phi^{-1}N)$.
\end{proof}

We are now ready for the proof of the main result of this subsection.

\begin{theorem}\label{AT_for_inv}
Let $0\to N_{\phi\restriction_N}\to M_\phi \to (M/N)_{\bar\phi}\to 0$ be a short exact sequence in $\lFin_L[X^{\pm 1}]$. Then, $\ent_L(\phi)=\ent_L(\phi\restriction_N)+\ent_L(\bar \phi)$.
\end{theorem}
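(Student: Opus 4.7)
The plan is to prove the two inequalities of the Addition Theorem separately, exploiting the fact that in $\lFin_L[X^{\pm 1}]$ all three of $\phi$, $\phi\restriction_N$ and $\bar\phi$ are automorphisms, so that Proposition \ref{nolimit} is fully available for each.

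For the upper bound $\ent_L(\phi) \leq \ent_L(\phi\restriction_N) + \ent_L(\bar\phi)$, I would fix any submodule $H \leq M$ with $\phi^{-1}H \leq H$ and $L(H/\phi^{-1}H)<\infty$, let $\pi: M \to M/N$ be the canonical projection, and set $\bar H = \pi(H)$. Bijectivity of $\phi$ yields the two identities $\phi^{-1}H \cap N = \phi^{-1}(H \cap N)$ (using $\phi^{-1}N = N$) and $\pi(\phi^{-1}H) = \bar\phi^{-1}\bar H$ (by a direct element chase with $\phi^{-1}$). Combined with the modular law these produce the short exact sequence
$$0 \to \frac{H \cap N}{\phi^{-1}(H \cap N)} \to \frac{H}{\phi^{-1}H} \to \frac{\bar H}{\bar\phi^{-1}\bar H} \to 0.$$
Additivity of $L$ and Proposition \ref{nolimit} applied to $\phi\restriction_N$ (with $\phi^{-1}$-invariant submodule $H \cap N$) and to $\bar\phi$ (with $\bar H$) then give $L(H/\phi^{-1}H) \leq \ent_L(\phi\restriction_N) + \ent_L(\bar\phi)$; supping over $H$ via Proposition \ref{nolimit} closes this direction.

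For the reverse inequality I would return to the trajectory definition together with Proposition \ref{sanatoria}. Fix finitely generated submodules $F \leq N$ and $\bar F \leq M/N$, lift $\bar F$ to a finitely generated $\hat F \leq M$ (automatically $L$-finite by local $L$-finiteness of $M$) and set $G = F + \hat F \in \F(M) \cap \Fin_L(M)$. One has the inclusion $T_n(\phi,G) \cap N \supseteq T_n(\phi\restriction_N, F)$ and the equality $\pi(T_n(\phi,G)) = T_n(\bar\phi, \bar F)$, so additivity of $L$ on the short exact sequence $0 \to T_n(\phi,G)\cap N \to T_n(\phi,G) \to \pi(T_n(\phi,G)) \to 0$ gives $L(T_n(\phi,G)) \geq L(T_n(\phi\restriction_N, F)) + L(T_n(\bar\phi, \bar F))$. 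Dividing by $n$ and passing to the limit yields $\ent_L(\phi,G) \geq \ent_L(\phi\restriction_N, F) + \ent_L(\bar\phi, \bar F)$, and taking the independent suprema over $F$ and over $\bar F$, in light of Proposition \ref{sanatoria} applied to $\phi\restriction_N$ and to $\bar\phi$, completes the proof.

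The main obstacle is the upper bound, which depends crucially on $\phi$ being bijective: both the equality $\pi(\phi^{-1}H) = \bar\phi^{-1}\bar H$ and the equality $\phi^{-1}N = N$ are needed to identify the kernel and the cokernel of the central short exact sequence above. This is precisely why the subsequent Subsection~5.2 must bootstrap from the bijective case to the general one via the central localisation $-\otimes_{R[X]}R[X^{\pm 1}]$ provided by Lemma~\ref{lemma_central_loc}.
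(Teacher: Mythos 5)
Your proof is correct and follows essentially the same route as the paper's: the upper bound uses Proposition~\ref{nolimit} and the short exact sequence $0\to (H\cap N)/\phi^{-1}(H\cap N)\to H/\phi^{-1}H\to \bar H/\bar\phi^{-1}\bar H\to 0$ (obtained via the modular law and $\phi^{-1}N=N$), and the lower bound uses the trajectory computation with a finitely generated $G=F+\hat F$ together with Proposition~\ref{sanatoria}. The only cosmetic difference is that you parametrize by a f.g.\ $\bar F\leq M/N$ lifted to $\hat F$, whereas the paper takes $F_1\in\mathcal F(M)$ directly; the resulting suprema coincide.
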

\begin{proof}
Let $F$ be a $\phi^{-1}$-invariant submodule of $M$ such that $L(F/\phi^{-1}F)<\infty$. We set $F'=F\cap N$ and $\bar F=(F+N)/N$; clearly $F'$ and $\bar F$ are $\phi^{-1}$-invariant and $\bar \phi^{-1}$-invariant, respectively. Using the equality $N=\phi^{-1}N$ and the fact that $\phi^{-1}$ commutes with intersection of submodules, we get the following isomorphisms:
$$\frac{ F'}{\phi^{-1}F'} \cong \frac{F \cap(\phi^{-1}F+N)}{\phi^{-1}F} \ \ , \ \ \frac{F}{F \cap(\phi^{-1}F+N)} \cong \frac{ \bar F}{\bar \phi^{-1} \bar F} \ .$$
From these isomorphisms we obtain the exact sequence
\begin{equation}\label{sostituto_snake} 
0\to  \frac{ F'}{\phi^{-1}F'}\to \frac{F}{\phi^{-1}F}\to \frac{ \bar F}{\bar \phi^{-1} \bar F}\to 0
\end{equation}
The short exact sequence \eqref{sostituto_snake}, together with Proposition \ref{nolimit}, shows that
$$L(F/\phi^{-1}F)=L(F'/\phi^{-1}F')+L(\bar F/\bar \phi^{-1}\bar F)\leq \ent_{L}(\phi')+\ent_L(\bar \phi)\,.$$
By the arbitrariness of $F$, we obtain that $\ent_L(\phi)\leq \ent_{L}(\phi')+\ent_L(\bar \phi)$.

For the converse inequality choose $F_1\in \F(M)$, $F_2\in \F(N)$, and let $F=F_1+F_2$. Then $\bar F=(F+N)/N=(F_1+N)/N$ and $F_2\subseteq F\cap N$; thus,
\begin{align*}
\ent_L(\phi,F)&=\lim_{n\to\infty}\frac{L(T_n(\phi,F))}{n}\\
&= \lim_{n\to\infty}\frac{L(T_n(\phi,F_2))+L(T_n(\phi,F)/T_n(\phi,F_2))}{n}\\
&\geq \lim_{n\to\infty}\frac{L(T_n(\phi,F_2))+L(T_n(\phi,F)/(N\cap T_n(\phi,F)))}{n}\\
&=\lim_{n\to\infty}\frac{L(T_n(\phi,F_2))+L(T_n(\bar \phi,\bar F))}{n}\\
&=\ent_L(\phi,F_2)+\ent_L(\bar \phi, \bar F)\,.
\end{align*}
One concludes by the arbitrariness of $F_1$ and $F_2$. \end{proof}

\begin{remark}\label{mult_rem}
Let $R$ be a right Noetherian ring and let $L:\mod R\to \R^*$ be a length function. Peter V\'amos \cite{V1} generalized the classical notion of multiplicity symbol (see for example \cite[Chapter 7]{N}), defining an $L$-multiplicity $\mult_L$, that is a length function of the category $\mod {R[X]}$ attached to $L$. The classical multiplicity symbol is recovered by taking $L=\ell$ to be the composition length.\\
Let us recall the definition of $\mult_L$: for a finitely generated $R[X]$-module $N_\phi$ 
$$\mult_L(N_\phi)=\begin{cases}
L(N/\phi N)-L(\ker(\phi))&\text{if $L(N/\phi N)<\infty$;}\\
\infty&\text{otherwise.}
\end{cases}$$
For an arbitrary $R[X]$-module $M_\phi$, $\mult_L(M_\phi)=\sup\{\mult_L(N_{\phi\restriction_N}):N_{\phi\restriction_N} \text{ f.g.}\}$.
Notice in particular that, if $\phi$ is bijective, then $\ker(\phi)=0$ and we obtain
$$\mult_L(M_\phi)=\sup\{L(N/\phi N):N_{\phi\restriction_N} \text{ f.g.}\}\,.$$
When $\phi$ an automorphism, one can also consider the $R[X]$-module $M_{\phi^{-1}}$ and, by Proposition \ref{sanatoria} and Lemma \ref{nolimit_lemma}, we obtain
$$\ent_L(M_{\phi^{-1}})=\sup\{L(N/\phi N):N_{\phi\restriction_N} \text{ f.g.}\}\,.$$
In particular, $\mult_L(M_\phi)=\ent_L(M_{\phi^{-1}})$ for any $R[X^{\pm1}]$-module $M_\phi$.
\end{remark}

\subsection{Reduction to $R[X^{\pm 1}]$-modules}\label{red_subs}
The key result of this subsection is the following

\begin{proposition}\label{hyperkernel} 
Let $M_\phi\in\lFin_L[X]$, let $K= \bigcup_{n\in\N} \Ker( \phi^n)$ and denote by $\bar\phi: M/K \to M/K$ the (injective) map induced by $\phi$. Then, $\ent_L(\phi)=\ent_L(\bar \phi)$.
\end{proposition}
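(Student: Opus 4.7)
My plan is to reduce to a statement at the level of individual trajectories: for each $F \in \F(M) \cap \Fin_L(M)$ with image $\bar F=(F+K)/K$, I would show $\ent_L(\phi,F)=\ent_L(\bar\phi,\bar F)$. Passing to the supremum, via Proposition~\ref{sanatoria} together with the observation that (by local $L$-finiteness of $M$) every $\bar G\in\F(M/K)\cap\Fin_L(M/K)$ lifts to some $F\in\F(M)\cap\Fin_L(M)$, then gives $\ent_L(\phi)=\ent_L(\bar\phi)$. The inequality $\ent_L(\phi,F)\ge\ent_L(\bar\phi,\bar F)$ is immediate from the canonical surjection $T_n(\phi,F)\twoheadrightarrow T_n(\bar\phi,\bar F)$ and the monotonicity of $L$.

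The heart of the argument is the reverse inequality. The map $\bar\phi$ is injective on $M/K$ (if $\phi(x)\in K$ then $x\in K$), so $x\in K$ if and only if $\phi^j(x)\in K$, which yields the key identity
\[ T_n(\phi,\phi^j F)\cap K \;=\; \phi^j\bigl(T_n(\phi,F)\cap K\bigr). \]
Restricting $\phi^j$ to $T_n(\phi,F)\cap K$ has kernel $T_n(\phi,F)\cap\ker(\phi^j)$, and additivity of $L$ gives
\[ L\bigl(T_n(\phi,\phi^j F)\cap K\bigr)=L\bigl(T_n(\phi,F)\cap K\bigr)-L\bigl(T_n(\phi,F)\cap\ker(\phi^j)\bigr). \]
Since $K=\bigcup_j \ker(\phi^j)$ is an ascending union, the directed-union property of a length function (a consequence of upper continuity plus monotonicity) forces the subtrahend to increase to $L(T_n(\phi,F)\cap K)$; hence $L(T_n(\phi,\phi^j F)\cap K)\to 0$ as $j\to\infty$, for each fixed $n$.

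Combining this with the short exact sequence $0\to T_n(\phi,\phi^j F)\cap K\to T_n(\phi,\phi^j F)\to T_n(\bar\phi,\bar\phi^j\bar F)\to 0$, and using the injectivity of $\bar\phi^j$ to conclude $L(T_n(\bar\phi,\bar\phi^j\bar F))=L(T_n(\bar\phi,\bar F))$, one obtains $L(T_n(\phi,\phi^j F))\to L(T_n(\bar\phi,\bar F))$ as $j\to\infty$, for each fixed $n$. Then Fekete's sub-additivity bound gives $\ent_L(\phi,\phi^j F)\le L(T_n(\phi,\phi^j F))/n$, while Corollary~\ref{coro_comp_with_traj}(2) yields $\ent_L(\phi,\phi^j F)=\ent_L(\phi,F)$; letting first $j\to\infty$ and then $n\to\infty$ gives $\ent_L(\phi,F)\le \ent_L(\bar\phi,\bar F)$, as desired.

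The main obstacle I anticipate is that the naive exact sequence $0\to T_n(\phi,F)\cap K\to T_n(\phi,F)\to T_n(\bar\phi,\bar F)\to 0$ does not directly suffice: the kernel-part $T_n(\phi,F)\cap K$ can carry substantial $L$-length and is not obviously of sub-linear growth in $n$. The crucial idea is to exploit the entropy-invariance $\ent_L(\phi,F)=\ent_L(\phi,\phi^j F)$ from Corollary~\ref{coro_comp_with_traj}(2): passing to $\phi^j F$ does not change the entropy, but pushes the kernel-part through the locally nilpotent action of $\phi^j$ on $K$, so that it vanishes in the limit.
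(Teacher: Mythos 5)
Your proof is correct, and it follows the same overall strategy as the paper's: reduce to comparing $\ent_L(\phi,F)$ with $\ent_L(\bar\phi,\bar F)$ for individual $F$, exploit the identity $\phi^j A\cap K = \phi^j(A\cap K)$ (the paper's Lemma~\ref{1}(1), which you apply with $A = T_n(\phi,F)$ in place of $F$), and use $\ent_L(\phi,F)=\ent_L(\phi,\phi^jF)$ from Corollary~\ref{entropia_con_potenza} to push the hyperkernel away without changing the entropy. The technical execution differs in a useful way, though. The paper first approximates $F\cap K$ by a chain of finitely generated submodules via Lemma~\ref{fin_gen_chain}, chooses $k_n$ so that the $n$-th term is killed by $\phi^{k_n}$, deduces $L(\phi^{k_n}F)-L(\bar\phi^{k_n}\bar F)<1/n$ (Lemma~\ref{1}(2)), and then lifts this single-module estimate to all trajectories $T_m$ by ``an argument similar to that used in Lemma~\ref{prop_base_dense}'', a step it leaves to the reader. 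You instead work with trajectories from the start: for fixed $n$, the additivity of $L$ together with the key identity gives exactly $L(T_n(\phi,\phi^jF)\cap K)=L(T_n(\phi,F)\cap K)-L(T_n(\phi,F)\cap\ker\phi^j)$, and this tends to $0$ as $j\to\infty$ because $T_n(\phi,F)\cap K=\bigcup_j\bigl(T_n(\phi,F)\cap\ker\phi^j\bigr)$ and $L$ is upper continuous. Combined with the injectivity of $\bar\phi^j$ on $M/K$ and the Fekete bound $\ent_L(\phi,\phi^jF)\le L(T_n(\phi,\phi^jF))/n$, you close the argument by sending $j\to\infty$ before $n\to\infty$, so you never need a single $k_n$ that works uniformly over $m$, nor a separate trajectory-lifting lemma. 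This makes your version more streamlined and self-contained where the paper is a bit terse. The supremum/lifting argument in your opening paragraph, passing from $\ent_L(\phi,F)=\ent_L(\bar\phi,\bar F)$ on $\F(M)\cap\Fin_L(M)$ to the equality of the two entropies, is also correct and matches Proposition~\ref{sanatoria}.
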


We will prove the above proposition at the end of this subsection, but let us first show how it leads to the proof of AT. First we deduce the following

\begin{corollary}\label{coro_loc_AT}
Let $M_\phi\in\lFin_L[X]$ and let $N_\psi=M_\phi\otimes_{R[X]}R[X^{\pm1}]$. Then, $\ent_L(\phi)=\ent_L(\psi)$.
\end{corollary}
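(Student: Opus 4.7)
The plan is to combine three ingredients already at our disposal: Proposition \ref{hyperkernel}, which reduces the computation of $\ent_L(\phi)$ to the injective quotient $\bar\phi$ on $M/K$ where $K=\ker_\infty(\phi)$; Lemma \ref{lemma_central_loc}, which presents $N_\psi$ as a directed union of copies of $(M/K)_{\bar\phi}$; and the upper continuity of $\ent_L$ given by Corollary \ref{upp_cont}, which allows entropy to be passed through such a directed union.

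More concretely, I would first invoke Proposition \ref{hyperkernel} to obtain $\ent_L(\phi)=\ent_L(\bar\phi)$. Next, by Lemma \ref{lemma_central_loc}, the $R[X^{\pm 1}]$-module $N_\psi$ is the directed union of an ascending system $\{(N_\alpha)_{\psi_\alpha}:\alpha\in\Lambda\}$ of $R[X]$-submodules, each of which is isomorphic to $(M/K)_{\bar\phi}$. Since $M\in\lFin(L)$, every finitely generated submodule of $M/K$ is the image under the projection $\pi\colon M\to M/K$ of a finitely generated submodule of $M$, and hence has finite $L$-length; thus $M/K\in\lFin(L)$. Because any finitely generated submodule of the directed union $N$ is contained in some $N_\alpha$, it follows that $N\in\lFin(L)$ as well, so $N_\psi\in\lFin_L[X^{\pm 1}]$ and the hypotheses of Corollary \ref{upp_cont} are met. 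Applying it yields
$$\ent_L(\psi)=\sup_{\alpha\in\Lambda}\ent_L((N_\alpha)_{\psi_\alpha})=\ent_L(\bar\phi),$$
which combined with the first step gives $\ent_L(\phi)=\ent_L(\bar\phi)=\ent_L(\psi)$.

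The argument is essentially a two-line deduction from the previously established machinery, so there is no substantial obstacle: the only point requiring minor verification is that $N$ itself is locally $L$-finite, which is needed to apply Corollary \ref{upp_cont}, but this is a formal consequence of local $L$-finiteness being inherited by quotients and preserved under directed unions.
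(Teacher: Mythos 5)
Your proposal is correct and follows essentially the same three-step route as the paper: apply Proposition \ref{hyperkernel} to get $\ent_L(\phi)=\ent_L(\bar\phi)$, use Lemma \ref{lemma_central_loc} to present $N_\psi$ as a directed union of copies of $(M/K)_{\bar\phi}$, and conclude by the upper continuity of Corollary \ref{upp_cont}. The only cosmetic difference is your extra check that $N\in\lFin(L)$, which is harmless but not actually needed, since Corollary \ref{upp_cont} asserts upper continuity of $\ent_L$ on all of $\mod{R[X]}$ without any local $L$-finiteness hypothesis.
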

\begin{proof}
By Lemma \ref{lemma_central_loc}, $N=\bigcup_{\alpha\in\Lambda} N_\alpha$, where $\{N_\alpha:\alpha\in\Lambda\}$ is a directed system of $\psi$-invariant submodules such that $(N_\alpha)_{\psi\restriction_{N_\alpha}}\cong (M/\ker_\infty(\phi))_{\bar\phi}$ for all $\alpha\in\Lambda$.  By Proposition \ref{hyperkernel}, $\ent_L(\psi\restriction_{N_\alpha})=\ent_L(\bar \phi)$ for all $\alpha\in\Lambda$. One can conclude using Corollary \ref{upp_cont}; in fact, $\ent_L(\psi)=\sup_{\alpha\in\Lambda}\ent_L(\psi\restriction_{N_\alpha})=\ent_L(\phi)$.
\end{proof}


\begin{theorem}[Addition Theorem]\label{coro_AT}
The restriction $\ent_L:\lFin_L[X]\to \R^*$ is additive, hence it is a length function.
\end{theorem}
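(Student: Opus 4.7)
The plan is to reduce the statement to the already-established Theorem \ref{AT_for_inv} by passing to the central localisation. Given a short exact sequence
$$0 \to N_{\phi\restriction_N} \to M_\phi \to (M/N)_{\bar\phi} \to 0$$
in $\lFin_L[X]$, I would apply the functor $-\otimes_{R[X]} R[X^{\pm 1}]$ termwise. By Lemma \ref{lemma_central_loc} this functor is exact, so the result is a short exact sequence
$$0 \to N_{\phi\restriction_N}\otimes_{R[X]}R[X^{\pm 1}] \to M_\phi\otimes_{R[X]}R[X^{\pm 1}] \to (M/N)_{\bar\phi}\otimes_{R[X]}R[X^{\pm 1}] \to 0$$
in $\mod{R[X^{\pm 1}]}$.

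Next I would check that this new sequence actually lies in $\lFin_L[X^{\pm 1}]$. By Lemma \ref{lemma_central_loc} the middle term is a directed union of copies of $(M/\ker_\infty(\phi))_{\bar\phi}$; the quotient $M/\ker_\infty(\phi)$ is a quotient of a locally $L$-finite module, hence locally $L$-finite (quotients of finitely generated $L$-finite modules are $L$-finite by additivity of $L$), and a directed union of locally $L$-finite submodules is again locally $L$-finite because any finitely generated submodule of the union is contained in some member of the system. The same argument applies to the two outer terms, using $N$ and $M/N$ in place of $M$.

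With the localised sequence in $\lFin_L[X^{\pm 1}]$, Theorem \ref{AT_for_inv} gives
$$\ent_L\bigl(M_\phi\otimes_{R[X]}R[X^{\pm 1}]\bigr) = \ent_L\bigl(N_{\phi\restriction_N}\otimes_{R[X]}R[X^{\pm 1}]\bigr) + \ent_L\bigl((M/N)_{\bar\phi}\otimes_{R[X]}R[X^{\pm 1}]\bigr).$$
Now I apply Corollary \ref{coro_loc_AT} three times (to $M_\phi$, to $N_{\phi\restriction_N}$, and to $(M/N)_{\bar\phi}$) to identify each of these three entropies with the corresponding entropy computed before localisation. This yields $\ent_L(\phi)=\ent_L(\phi\restriction_N)+\ent_L(\bar\phi)$, which is the Addition Theorem.

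Once additivity on $\lFin_L[X]$ is proved, the final assertion that $\ent_L$ is a length function on $\lFin_L[X]$ follows immediately: $\ent_L$ is an invariant by construction, it is upper continuous by Corollary \ref{upp_cont}, and additivity has just been established. The main conceptual obstacle is the verification that central localisation keeps us inside $\lFin_L$; the rest is a bookkeeping exercise combining the two reduction steps (Theorem \ref{AT_for_inv} and Corollary \ref{coro_loc_AT}) that have been set up precisely for this purpose.
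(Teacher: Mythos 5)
Your plan is correct and follows the paper's own proof exactly: localise the short exact sequence via the exact functor $-\otimes_{R[X]}R[X^{\pm1}]$, apply Theorem \ref{AT_for_inv} to the localised sequence, and transfer back with three applications of Corollary \ref{coro_loc_AT}, then invoke Corollary \ref{upp_cont} for the length-function claim. The only (welcome) difference is that you spell out the verification that the localised modules stay in $\lFin_L[X^{\pm1}]$, a point the paper leaves implicit.
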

\begin{proof}
Let $N_{\phi'}\leq M_\phi \in \lFin_L[X]$ and denote their quotient by $(M/N)_{\bar \phi}$. Then,
\begin{align*}
\ent_L(M_\phi)&\overset{(*)}{=}\ent_L(M_\phi \otimes_{R[X]}R[X^{\pm1}])\\
&\overset{(**)}{=}\ent_L(N_{\phi'}\otimes_{R[X]}R[X^{\pm1}])+\ent_L((M/N)_{\bar\phi}\otimes_{R[X]}R[X^{\pm1}])\\
&\overset{(*)}{=}\ent_L(N_{\phi'})+\ent_L((M/N)_{\bar \phi})
\end{align*}
where the equalities $(*)$ are an application of Corollary \ref{coro_loc_AT}, while the equality $(**)$ comes from Theorem \ref{AT_for_inv} and the exactness of the functor $-\otimes_{R[X]}R[X^{\pm 1}]$, proved in Lemma \ref{lemma_central_loc}. The final claim follows by Corollary \ref{upp_cont}.
\end{proof}

The rest of this subsection is devoted to the proof of Proposition \ref{hyperkernel}; in what follows we keep the setting and notation of the statement of that proposition. Let us first prove the following technical lemma.

\begin{lemma} \label{1}
Let $F\in\Fin_L(M)$ and let $\bar F=F\cap K$. Then
\begin{enumerate}[\rm (1)]
\item $\phi^n(F\cap K)=\phi^n F \cap K$;
\item there is an increasing sequence $(k_n)_{n\in\N}$ of positive integers
 such that
$$L(\phi^{k_n} F )-L(\bar \phi^{k_n} \bar F)<1/n\,.$$
\end{enumerate}
\end{lemma}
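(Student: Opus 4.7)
The plan is to treat the two parts in turn. For (1) I would argue by double inclusion. The inclusion $\phi^n(F\cap K)\subseteq\phi^n F\cap K$ is immediate because $K$ is $\phi$-invariant. For the reverse, if $y=\phi^n(x)\in\phi^n F\cap K$ with $x\in F$, then the hyperkernel condition $y\in K$ supplies some $m$ with $\phi^{n+m}(x)=\phi^m(y)=0$, so $x\in\ker(\phi^{n+m})\subseteq K$ and hence $y\in\phi^n(F\cap K)$.

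For part (2), I would read $\bar F$ as the image of $F$ in $M/K$ (this is forced by the fact that $\bar\phi$ is the endomorphism of $M/K$; the clause ``$\bar F=F\cap K$'' in the hypothesis is best regarded as a slip for $\bar F=(F+K)/K$). Under this reading, $\bar\phi^k\bar F=(\phi^kF+K)/K\cong\phi^kF/(\phi^kF\cap K)$, so by part (1) and the additivity of $L$,
$$L(\phi^k F)-L(\bar\phi^k\bar F)=L(\phi^kF\cap K)=L\bigl(\phi^k(F\cap K)\bigr).$$
Thus the task reduces to producing an increasing sequence $(k_n)$ with $L(\phi^{k_n}(F\cap K))<1/n$.

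To achieve this, I would set $G_m:=F\cap\ker(\phi^m)$, so that $F\cap K=\bigcup_{m\in\N}G_m$ is an ascending union. Since $F\cap K\subseteq F\in\Fin_L(M)$, the module $F\cap K$ is $L$-finite, and Lemma \ref{prep} applied to the chain $(G_m)$ inside $F\cap K$ yields $\lim_k L((F\cap K)/G_k)=L(0)=0$. On the other hand, $\phi^k$ annihilates $G_k$, so $\phi^k(F\cap K)$ is a homomorphic image of $(F\cap K)/G_k$, giving $L(\phi^k(F\cap K))\leq L((F\cap K)/G_k)$. Choosing $k_n$ strictly increasing and large enough to force the right-hand side below $1/n$ finishes the proof.

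The only real subtlety is the bookkeeping in part (2): one has to recognise that part (1) is exactly what converts $\phi^kF\cap K$ into the more tractable $\phi^k(F\cap K)$, at which point the estimate is purely a matter of exploiting upper continuity via Lemma \ref{prep}. No new limit-level argument, and certainly no induction on $L$-values, is required.
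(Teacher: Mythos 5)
Your proof of part (1) is the same double-inclusion argument as the paper's. For part (2) you reach the same reduction as the paper (via part (1) and the short exact sequence $0\to\phi^{k}F\cap K\to\phi^{k}F\to\bar\phi^{k}\bar F\to 0$, with $\bar F$ read as $(F+K)/K$ — the paper's proof in fact uses exactly this exact sequence, so your reading of the typo is correct), but you get the estimate by a slightly different route. The paper first invokes Lemma \ref{fin_gen_chain} to pick \emph{finitely generated} submodules $F_n\leq F\cap K$ with $L(F\cap K)-L(F_n)<1/n$, then uses finite generation to find $k_n$ with $\phi^{k_n}(F_n)=0$, and finally observes that $\phi^{k_n}(F\cap K)$ is a quotient of $(F\cap K)/F_n$. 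You instead apply Lemma \ref{prep} directly to the natural kernel filtration $G_m=F\cap\ker(\phi^m)$, whose union is $F\cap K$, getting $\lim_m L((F\cap K)/G_m)=0$ and noting $\phi^m(F\cap K)\cong(F\cap K)/G_m$. Your route is marginally cleaner in that it bypasses the detour through finitely generated submodules, at the cost of having to observe that the sequence $L((F\cap K)/G_m)$ is non-increasing (or at least eventually small) so that a strictly increasing $(k_n)$ can be extracted; the paper's route makes the choice of $k_n$ concrete by construction. Both hinge on the same ingredient, the upper continuity of $L$, packaged either as Lemma \ref{fin_gen_chain} or as Lemma \ref{prep}. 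Either proof is correct.
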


\begin{proof}
(1) If $x\in \phi^n F$ then $x=\phi^n(y)$ for some $y \in F$, while if $x\in K$ then $\phi^m(x)=0$ for some $m\in\N$. Thus, if $x\in \phi^n F\cap K$, then there exist $y\in F$ and $m\in\N$ such that $0=\phi^m(x)=\phi^{n+m}(y)$, which shows that $y\in K$ and so $x\in \phi^{n}(F\cap K)$. The other inclusion is obvious.

\smallskip\noindent
(2) Since $F\in \Fin_L(M)$,  $F\cap K$ is $L$-finite. By Lemma \ref{fin_gen_chain}, there is a sequence $0=F_0\leq F_1\leq \ldots \leq F_n\leq \ldots $ of finitely generated submodules of $F\cap K$ such that $L(F\cap K)-L(F_n)<1/n$. Since each $F_n$ is finitely generated, there exists an increasing sequence of positive integers $(k_n)_{n\in\N}$ such that $\phi ^{k_n}(F_n)=0$. Consider, for each $n$, the short exact sequence:
$$0\to \phi^{k_n} F \cap K \to \phi^{k_n} F \to \bar\phi^{k_n} \bar F \to 0 \ .$$
By part (1),  $\phi^{k_n} F \cap K= \phi^{k_n} (F \cap K)$, and since this module is a quotient of 
$(F\cap K)/F_n$, then $L(\phi^{k_n} F )-L(\bar \phi^{k_n} \bar F)=L(\phi^{k_n} F \cap K)<1/n$ by construction.  
\end{proof}

\begin{proof}[Proof of Proposition \ref{hyperkernel}]
Let $F \in \Fin_L(M)$. We will prove that $\ent_L(\phi,F)=\ent_L(\bar \phi,\bar F)$, where $\bar F=(F+K)/K$.
Using Corollary \ref{entropia_con_potenza} both for $M$ and $M/K$, it is enough to check that, for every $n$, there exists $k_n$ such that
the following inequality holds: $\ent_L(\phi, \phi^{k_n}F) - \ent_L(\bar\phi,  \bar\phi^{k_n} \bar F) \leq 1/n$.
We can choose a sequence $(k_n)_{n\in\N}$ as in Lemma \ref{1} (2); then it is easily seen (using an argument similar to that used in Lemma \ref{prop_base_dense}) that, for each $m>1$,
$$L(T_m(\phi, \phi^{k_n}F)) - L(T_m( \bar\phi, \bar\phi^{k_n}\bar F)) < m/n\,.$$
Dividing by $m$ and taking the limit for $m \to \infty$ we get the desired inequality. 
\end{proof}

\section{Entropy in many variables}

Let $k$ be a positive integer; in this section we extend the definition of entropy to modules over the ring of polynomials $R[X_1,\dots,X_k]$, that is, we define an invariant
$$\ent_L:\mod {R[X_1,\dots,X_k]}\to \R^*\,.$$
As we did for $\mod {R[X]}$, we consider a right  ${R[X_1,\dots,X_k]}$-module $M_{R[X_1,\dots,X_k]}$ as a pair $M_{\Phi}$, where $\Phi=(\phi_1,\dots,\phi_k)$ is a $k$-tuple of pair-wise commuting endomorphisms of $M_R$, with $\phi_i$ representing the action of $X_i$ on $M$. For any $L$-finite submodule $N_R\leq M$, the {\em $n$-th $\Phi$-trajectory of $N$} is
$$T_n(\Phi,N)=\sum_{i_1,\dots,i_k=0}^{n-1}\phi_1^{i_1}\ldots\phi_k^{i_k}N\,.$$ 
The {\em $L$-entropy} of $M_\Phi$ is defined as 
$$\ent_L(M_\Phi)=\ent_L(\Phi)= \sup \{ \ent_L(\Phi, N) : N \in \Fin_L(M) \}\,,$$
where
\begin{equation}\label{eq_def_poli_ent}
\ent_L(\Phi, N)=\lim_{n \to \infty} \frac{1}{n^k}L(T_n(\Phi,N))\,.
\end{equation}
The existence of the above limit follows by (a weak form of) a result of Ceccherini-Silberstein, Coornaert and Krieger \cite{amenable_monoid} that generalizes Fekete's Lemma to sub-additive functions on cancellative amenable semigroups. More explicitly,  the monoid $\N^k$ is clearly cancellative and one can check its amenability (there is no need to distinguish left and right  here as $\N^k$ is commutative) using the F\o lner sequence $\{F_n\}_{n\in\N_{\geq 1}}$, where
$$F_n=\{(h_1,\dots,h_n)\in\N^k:h_i< n\}\,,$$
for all $n\in\N_{\geq 1}$. Denote by $\mathcal P^{fin}(\N^k)$ the set of finite parts of $\N^k$, let $N\in\Fin_L(M)$ and denote by $\rho:\N^k\to \End(M)$ the monoid homomorphism mapping $e_i\mapsto \phi_i$, where $\{e_i:i=1,\dots,k\}$ is the standard basis of $\N^k$. By the additivity of $L$, the function
$$\textstyle f_N:\mathcal P^{fin}(\N^k)\to \R_{\geq 0} \ \ \ f_N(F)=L\left (\sum_{f\in F} \rho_f(N)\right)$$
satisfies the assumptions of \cite[Theorem 1.1]{amenable_monoid}. The conclusion of \cite[Theorem 1.1]{amenable_monoid} tells us that $\lim_{n\to\infty} f_N(F_n)/|F_n|$ exists finite, that is, the limit in \eqref{eq_def_poli_ent} exists finite.

\smallskip
Let $\lFin_L[X_1,\dots,X_k]$ be the class of all the right $R[X_1,\dots,X_k]$-modules $M_{\Phi}$ such that $M_R\in\lFin(L)$. The main result of this section is the following

\begin{theorem}
The restriction $\ent_L:\lFin_L[X_1,\dots,X_k]\to \R^*$ is a length function.
\end{theorem}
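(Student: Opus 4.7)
The argument is by induction on $k$, with base case $k=1$ being Theorem~\ref{coro_AT}. For the inductive step, set $L' := \ent_L$ viewed as a length function on $\mod{R[X_1,\dots,X_{k-1}]}$ (by the inductive hypothesis on $\lFin_L[X_1,\dots,X_{k-1}]$, extended by $\infty$ elsewhere), and apply Theorem~\ref{coro_AT} with base ring $R[X_1,\dots,X_{k-1}]$ and length function $L'$: this gives that $\ent_{L'}$ is a length function on $\lFin_{L'}[X_1,\dots,X_{k-1}][X_k]$. After identifying $R[X_1,\dots,X_k] \cong R[X_1,\dots,X_{k-1}][X_k]$, it suffices to prove the key identity
$$\ent_L(M_\Phi) \,=\, \ent_{L'}\bigl((M_{\Phi'})_{\phi_k}\bigr) \quad \text{for every } M_\Phi \in \lFin_L[X_1,\dots,X_k]\,,$$
where $\Phi' := (\phi_1,\dots,\phi_{k-1})$. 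Since $\lFin_L[X_1,\dots,X_k]$ sits inside $\lFin_{L'}[X_1,\dots,X_{k-1}][X_k]$ as a full subcategory closed under subobjects, quotients, and extensions, granted the identity, additivity and upper continuity of $\ent_L$ are inherited from those of $\ent_{L'}$.

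One first extends Section~\ref{upp_cont_sec} to the many-variable setting: the analog of Lemma~\ref{prop_base_dense}(2) becomes $L(T_n(\Phi, N)) - L(T_n(\Phi, N_0)) < n^k \varepsilon$ whenever $L(N) - L(N_0) < \varepsilon$, proved by peeling off the $n^k$ translates $\Phi^I N$ indexed by $I \in \{0,\dots,n-1\}^k$. Dividing by $n^k$ yields the many-variable Proposition~\ref{sanatoria}, and hence upper continuity. A routine adaptation of Corollary~\ref{gen_da_fin} moreover gives $L'(T(\Phi', F)) = \ent_L(\Phi', F) \leq L(F) < \infty$ for every $F \in \mathcal F(M) \cap \Fin_L(M)$; this, together with the fact that any finitely generated $R[X_1,\dots,X_{k-1}]$-submodule of $M$ is contained in some $T(\Phi', F)$ for $F$ a finitely generated $L$-finite $R$-submodule, guarantees the inclusion of subcategories claimed above.

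For the key identity, fix $F \in \mathcal F(M) \cap \Fin_L(M)$ and observe the straightforward identity of $n$-th trajectories
$$T_n(\Phi, F) \,=\, T_n\bigl(\Phi', T_n(\phi_k, F)\bigr)\,,$$
so that $L(T_n(\Phi, F))/n^k = L(T_n(\Phi', T_n(\phi_k, F)))/(n \cdot n^{k-1})$. The F\o lner-Fekete lemma of \cite{amenable_monoid} permits the computation of the $\N^k$-entropy along any F\o lner sequence for $\N^k$; choosing the inhomogeneous product sequence $F_{m_n, n} = \{0,\dots,m_n-1\}^{k-1} \times \{0,\dots,n-1\}$ with $m_n \to \infty$ fast enough to make $L(T_{m_n}(\Phi', T_n(\phi_k, F)))/m_n^{k-1}$ approximate $\ent_L(\Phi', T_n(\phi_k, F))$ to within $1/n$, one deduces
\begin{align*}
\ent_L(\Phi, F) &= \lim_{n \to \infty} \frac{\ent_L(\Phi', T_n(\phi_k, F))}{n} = \lim_{n \to \infty} \frac{L'\bigl(T_n(\phi_k, T(\Phi', F))\bigr)}{n} \\
&= \ent_{L'}\bigl(\phi_k, T(\Phi', F)\bigr)\,.
\end{align*}
Taking sup over $F$ and using upper continuity on both sides establishes the desired identity. \textbf{The main obstacle is precisely this middle step:} converting the diagonal $\N^k$-limit into the iterated limit requires the F\o lner-invariance statement that the entropy can be computed along the inhomogeneous product sequence $F_{m_n, n}$, which is a slight strengthening of \cite[Theorem~1.1]{amenable_monoid} and must be verified explicitly (by a standard diagonal extraction argument showing that $\{F_{m_n, n}\}$ is itself a F\o lner sequence for $\N^k$).
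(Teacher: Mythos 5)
Your proposal follows essentially the same route as the paper: induction on $k$, reducing to the one-variable Theorem~\ref{coro_AT} by identifying $\ent_L$ on $\lFin_L[X_1,\dots,X_k]$ with $\ent_{L'}$, where $L'=\ent_L$ is the induced length function on the $(k-1)$-variable modules and one tensors out the last variable $\phi_k$. The one point where you go noticeably beyond the paper is the middle step — converting the diagonal limit $\lim_n L(T_n(\Phi',T_n(\phi_k,F)))/n^k$ into the iterated limit $\lim_m\frac{1}{m}\lim_n L(T_n(\Phi',T_m(\phi_k,F)))/n^{k-1}$ — which the paper simply asserts. Your diagnosis that this needs an argument is correct; however the F\o lner-invariance machinery you invoke (passing to an inhomogeneous product F\o lner sequence $F_{m_n,n}$) is heavier than necessary, and leans on a property of \cite[Theorem 1.1]{amenable_monoid} that you yourself flag as needing verification. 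A shorter, self-contained justification is available by pure subadditivity: writing $a_{n,m}=L(T_n(\Phi',T_m(\phi_k,F)))$ and $b_m=\ent_L(\Phi',T_m(\phi_k,F))=\inf_n a_{n,m}/n^{k-1}$, one has on one side $b_m/m\le a_{m,m}/m^k$, so $\lim_m b_m/m\le\ent_L(\Phi,F)$; and on the other, for $q=\lfloor n/m\rfloor$, $a_{n,n}\le a_{n,(q+1)m}\le(q+1)a_{n,m}$ (split $T_{(q+1)m}(\phi_k,F)$ into $q+1$ shifted copies of $T_m(\phi_k,F)$), whence $a_{n,n}/n^k\le\frac{q+1}{n}\cdot a_{n,m}/n^{k-1}\to b_m/m$, giving the reverse inequality. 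The final chain then matches the paper's, reading $\ent_L(\Phi,F)=\lim_m b_m/m=\ent_{L'}(\phi_k,T(\Phi',F))$ via (the multi-variable) Corollary~\ref{gen_da_fin}. One small caveat shared with the paper itself: applying Theorem~\ref{coro_AT} over the ``base ring'' $R[X_1,\dots,X_{k-1}]$ technically presumes $L'$ is a length function on all of $\mod{R[X_1,\dots,X_{k-1}]}$, whereas the inductive hypothesis gives it only on $\lFin_L[X_1,\dots,X_{k-1}]$; your ``extend by $\infty$'' fix requires $\lFin(L)$ to be closed under extensions in order for additivity of the extended $L'$ to hold, which is not checked. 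Since the modules actually encountered in the inductive step all lie in $\lFin_L[X_1,\dots,X_k]$, it is cleaner (and what the paper tacitly does) to note that the proof of Theorem~\ref{coro_AT} only ever uses values of the length function on subquotients of the given module, so it applies verbatim with $L'$ defined on the subcategory.
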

\begin{proof}
We proceed by induction on $k\geq 1$. The case $k=1$ is Theorem \ref{coro_AT}. Suppose now that $k>1$ and
$\ent_L:\lFin_L[X_1,\dots,X_{k-1}]\to \R^*$ is a length function. We fix the following notation:
\begin{enumerate}[\rm --]
\item $\C_{k-1}=\lFin_L[X_1,\dots,X_{k-1}]$, so that  $\C_{k-1}[X_k]=\lFin_L[X_1,\dots,X_{k}]$;
\item $L_{k-1}=\ent_L:\C_{k-1}\to \R^*$ and $L_{k}=\ent_L:\C_{k-1}[X_k]\to \R^*$.
\end{enumerate}
Let $(M_{\phi_1,\dots,\phi_{k-1}})_{\phi_k}\in \C_{k-1}[X_k]$ (so $M_{\phi_1,\dots, \phi_{k-1}}\in\C_{k-1}$) and let $N\in \Fin_L(M)$; then 
\begin{align*}
\ent_L({\phi_1,\dots,\phi_{k}},N)&=\lim_{n\to\infty}\frac{L(T_n({\phi_1,\dots,\phi_{k}},N))}{n^k}\\
&=\lim_{n\to\infty}\frac{L(T_n(\phi_1,\dots,\phi_{k-1},T_n(\phi_k,N)))}{n^k}\\
&=\lim_{m\to\infty}\frac{1}{m} \lim_{n\to\infty}\frac{L(T_n(\phi_1,\dots,\phi_{k-1},T_m(\phi_k,N)))}{n^{k-1}}\\
&\overset{(*)}{=}\lim_{m\to\infty}\frac{L_{k-1}(T_m(\phi_k, N)R[X_1,\dots,X_{k-1}])}{m}\\
&{=}\lim_{m\to\infty}\frac{L_{k-1}(T_m(\phi_k, NR[X_1,\dots,X_{k-1}]))}{m}\\
&=\ent_{L_{k-1}}(\phi_k, NR[X_1,\dots,X_{k-1}])\,,
\end{align*} 
where equality $(*)$ follows by Corollary \ref{gen_da_fin}. Taking the supremum with $N$ ranging in $\Fin_L(M)$ we obtain that $L_{k}=\ent_{L_{k-1}}$ and we already know that $\ent_{L_{k-1}}$ is a length function of $\C_{k-1}[X_k]$ by Theorem \ref{coro_AT}.
\end{proof}

\section{The Uniqueness Theorem for $\ent_L$}\label{Unique_section}

One application of the Addition Theorem in \cite{SVV} is the Uniqueness Theorem (UT, for short). It states that, given a discrete length function $L: {\rm Mod}(R) \to \R^*$, the algebraic entropy $\ent_L$ is the unique length function $L_X : \lFin_L[X]\to \R^*$ such 
that $L_X(B(M))=L(M)$ for all $L$-finite modules $M$, where
$$B: \mod R \to \mod {R[X]}$$
is the {\em Bernoulli functor}, defined by setting $B(M)=(M^{(\N)})_\beta$, with $\beta: M^{(\N)} \to M^{(\N)}$ the right Bernoulli shift. Clearly,
the Bernoulli functor $B$ is isomorphic to the tensor product $- \otimes_R R[X]$ (see \cite{SVV}). The proof of UT was given in \cite{SVV} under the additional hypothesis that $L_X(M_\phi)=0$ for any $R[X]$-module $M_\phi$ with $M\in\Fin(L)$, and this hypothesis was removed in \cite{S}.

The proof of UT in the case of $L$ discrete is based on AT and on Proposition 3.1 of \cite{SVV}, that does not apply to a non-discrete length function. 
%
%
%
The goal of this section is to extend UT to the non-dicrete length function $L_v$ illustrated at the end of Section \ref{upp_cont_sec}.

Let us start introducing the following notions for an arbitrary ring $R$. Let $\sigma$ be a countable ascending chain of proper ideals of $R$, that is,
\begin{equation}\label{sigma}\sigma \ \ : \ \ I_0 \leq I_1 \leq I_2 \leq \cdots \leq \bigcup_n I_n =I_\infty\,,\end{equation}
and consider the functor $B_\sigma : {\rm Mod}(R) \to {\rm Mod}(R[X])$ defined on objects by:
$$\textstyle B_\sigma (M)=\left(\bigoplus_{n\geq 1} (M/MI_n)\right)_{\beta_\sigma}$$
where $\beta_\sigma : \bigoplus_{n\geq 1} (M/MI_n) \to \bigoplus_{n\geq 1} (M/MI_n)$ is the right Bernoulli shift associated with $\sigma$, defined by setting $\beta_\sigma(x_n+MI_n)_{n\geq 1}=(x_{n}+MI_{n+1})_{n\geq 0}$, where we set $x_0=0$.

\begin{lemma}\label{finiti+shif_gen}
Let $L_X:\lFin_L[X]\to \R^*$ be a length function such that 
$$L_X(B(N))=L(N)\ \ \text{ for all } \ N\in\Fin(L)\,.$$
The following statements hold true:
\begin{enumerate}[\rm (1)]
\item given $N_\phi\in \lFin_L[X]$ such that $L(N_R)<\infty$, then $L_X(N_\phi)=0$;
\item for a chain of ideals as in \eqref{sigma}, if $L(R/I_0)<\infty$ then $L_X(B_\sigma(R))=L(R/I_\infty)$.
\end{enumerate}
\end{lemma}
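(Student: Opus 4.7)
My plan is to prove part (1) first, since it will be used as a lemma in the proof of part (2). For part (1), the key idea is to realise $N_\phi$ as the cokernel of a suitable $R[X]$-linear endomorphism of $B(N) = N \otimes_R R[X]$. Specifically, I let $\widetilde{\phi} = \phi \otimes_R \mathrm{id}_{R[X]}$ be the $R[X]$-linear extension of $\phi$ to $B(N)$; since $\widetilde{\phi}$ commutes with multiplication by $X$, the difference $X - \widetilde{\phi}$ is $R[X]$-linear. A short check (inspecting the top $X$-degree coefficient) shows that $X - \widetilde{\phi}$ is injective, while a triangular elimination in the powers of $X$ shows that its image coincides with the kernel of the canonical quotient $B(N) \to N_\phi$ sending $n \otimes X^i \mapsto \phi^i(n)$. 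One thereby obtains a short exact sequence
\begin{equation*}
0 \to B(N) \xrightarrow{X - \widetilde{\phi}} B(N) \to N_\phi \to 0
\end{equation*}
in $\mod{R[X]}$. Applying the additivity of $L_X$ together with the hypothesis $L_X(B(N)) = L(N) < \infty$, the two finite copies of $L(N)$ cancel to yield $L_X(N_\phi) = 0$.

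For part (2), set $\bar I_n = I_n/I_0$ (so $\bar I_\infty = I_\infty/I_0$) and consider the $R[X]$-linear surjection $p : B(R/I_0) \twoheadrightarrow B_\sigma(R)$ defined position-wise by the natural projections $R/I_0 \twoheadrightarrow R/I_n$; its $R[X]$-linearity is ensured by the inclusions $\bar I_n \subseteq \bar I_{n+1}$, and its kernel is $K = \bigoplus_{n\geq 1} \bar I_n$. Since $L(R/I_0) = L(\bar I_\infty) + L(R/I_\infty)$, additivity of $L_X$ together with $L_X(B(R/I_0)) = L(R/I_0)$ reduces the claim to the identity $L_X(K) = L(\bar I_\infty)$. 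To compute this, embed $K$ into $B(\bar I_\infty)$ via the evident $R[X]$-linear inclusion; the cokernel is $C = \bigoplus_{n \geq 1} \bar I_\infty/\bar I_n$, with shift induced component-wise by the natural projections $\bar I_\infty/\bar I_n \to \bar I_\infty/\bar I_{n+1}$. Since $L_X(B(\bar I_\infty)) = L(\bar I_\infty)$ by hypothesis, by additivity the remaining task is to show $L_X(C) = 0$.

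The crux is a double truncation. For each $m \geq 1$ let $C^{\geq m} = \bigoplus_{n \geq m} \bar I_\infty/\bar I_n$. The quotient $C/C^{\geq m}$ is $L$-finite as an $R$-module (a finite direct sum of pieces each of finite $L$-length), so part (1) forces $L_X(C/C^{\geq m}) = 0$, whence $L_X(C) = L_X(C^{\geq m})$. On the other hand, the natural surjections $\bar I_\infty/\bar I_m \twoheadrightarrow \bar I_\infty/\bar I_n$ (for $n \geq m$) assemble into an $R[X]$-linear surjection $B(\bar I_\infty/\bar I_m) \twoheadrightarrow C^{\geq m}$, giving $L_X(C^{\geq m}) \leq L(\bar I_\infty/\bar I_m) = L(\bar I_\infty) - L(\bar I_m)$. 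Letting $m \to \infty$, upper continuity of $L$ on the chain $\bar I_m \nearrow \bar I_\infty$ sends the right-hand side to $0$, so $L_X(C) = 0$ and the lemma follows. The main obstacle will be setting up the $R[X]$-module structures on the auxiliary modules $K$ and $C$ correctly (in particular, checking that the shift on $C$ really is induced by the projections $\bar I_\infty/\bar I_n \to \bar I_\infty/\bar I_{n+1}$) and exhibiting the compatible surjection $B(\bar I_\infty/\bar I_m) \twoheadrightarrow C^{\geq m}$; once these are in place, the rest is routine bookkeeping with additivity and upper continuity.
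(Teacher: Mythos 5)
Your proof is correct, and for part (2) it takes a genuinely different route from the paper.

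For part (1) the paper simply refers to \cite[Proposition 2.2]{S}; your explicit argument via the short exact sequence $0 \to B(N) \xrightarrow{X - \widetilde{\phi}} B(N) \to N_\phi \to 0$ is the standard ``characteristic sequence'' construction, and it is sound: $X-\widetilde\phi$ is $R[X]$-linear because $\widetilde\phi$ commutes with $X$, injectivity follows from inspecting the leading coefficient, and a telescoping computation identifies $\Im(X-\widetilde\phi)$ with the kernel of the evaluation map $n\otimes X^i\mapsto\phi^i(n)$. Additivity of $L_X$ together with $L_X(B(N))=L(N)<\infty$ then gives $L_X(N_\phi)=0$.

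For part (2) both you and the paper start from $0 \to K \to B(R/I_0) \xrightarrow{\pi} B_\sigma(R) \to 0$ with $K=\bigoplus_n \bar I_n$, but then diverge. The paper exhausts $K$ by the ``eventually constant'' $R[X]$-submodules $M_n=0\oplus(I_1/I_0)\oplus\cdots\oplus(I_n/I_0)\oplus(I_n/I_0)\oplus\cdots$, applies Lemma \ref{prep} \emph{to $L_X$} (legitimate since $L_X(B(R/I_0))<\infty$) to obtain $L_X(B_\sigma(R))=\lim_n L_X(B(R/I_0)/M_n)$, and then uses part (1) to identify $L_X(B(R/I_0)/M_n)$ with $L(R/I_n)$, since the two modules differ by an $L$-finite piece. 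You instead isolate $L_X(K)$: embedding $K\hookrightarrow B(\bar I_\infty)$ reduces the problem to showing the cokernel $C=\bigoplus_n\bar I_\infty/\bar I_n$ has $L_X(C)=0$, which you get by killing the head $C/C^{\geq m}$ via part (1) and bounding the tail $C^{\geq m}$ by $L(\bar I_\infty/\bar I_m)\to 0$. Both are correct; the paper's argument is marginally shorter because it reuses Lemma \ref{prep} at the $R[X]$-level, whereas yours uses the lemma (or plain upper continuity of $L$) only at the $R$-level, at the cost of one extra embedding. One small presentational point: your ``upper continuity of $L$ on the chain $\bar I_m\nearrow\bar I_\infty$'' is really an application of Lemma \ref{prep} with $F=F_\infty=\bar I_\infty$ (upper continuity in the paper's sense is phrased in terms of finitely generated submodules); it is worth making this explicit, though it is of course standard that length functions commute with directed unions.
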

\begin{proof}
(1) Proceed as in \cite[Proposition 2.2]{S}.

\smallskip\noindent
(2) Let us consider first the obvious projection
$$0\to \ker(\pi)\to B(R/I_0)\overset{\pi}{\longrightarrow} B_\sigma(R)\to0\,,$$
where $\ker(\pi)$ is, as a right $R$-module, a direct sum of the form 
$$0\oplus (I_1/I_0) \oplus (I_2/I_0)\oplus \ldots \oplus (I_n/I_0)\oplus \ldots$$
with $X$ acting as a shift. For each $n\geq 1$ define the following $R[X]$-module:
$$M_n = 0\oplus (I_1/I_0)\oplus (I_2/I_0)\oplus \ldots \oplus (I_n/I_0)\oplus (I_n/I_0)\oplus\ldots\oplus (I_n/I_0)\oplus\ldots$$
with $X$ acting on $M_n$ as a shift. Since $\ker(\pi)\cong \bigcup_n M_n$, we obtain by Lemma \ref{prep} that
$$L_X(B_\sigma(R))=\lim_{n\to\infty} L_X(B(R/I_0)/M_n)\,.$$
To conclude notice that $B(R/I_0)/M_n$ differs from $B(R/I_n)$ by an $L$-finite module so that, by part (1), $L_X(B(R/I_0)/M_n)=L_X(B(R/I_n))=L(R/I_n)$ and so, using again Lemma \ref{prep}, $L_X(B_\sigma(R))=\lim_{n\to\infty}L(R/I_n)=L(R/I_\infty)$.
\end{proof}

\begin{lemma}\label{riduzione_una_dis_sui_ciclici}
Let $L_X:\lFin_L[X]\to \R^*$ be a length function such that $L_X(B(N))=L(N)$, for all $N\in\Fin(L)$. 
If 
\begin{equation}\label{cond_leq}
L_X(M_\phi)\geq \ent_L(M_\phi)\ \ \ \text{ for any cyclic $M_\phi\in\lFin_L[X]$,} 
\end{equation}
then $L_X$ coincides with the restriction of $\ent_L$ to $\lFin_L[X]$.
\end{lemma}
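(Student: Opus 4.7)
The plan is to exploit that both $L_X$ (by hypothesis) and $\ent_L$ (by Theorem \ref{coro_AT}) are length functions on $\lFin_L[X]$, hence additive and upper continuous. The key reduction is the following general principle: an inequality between two length functions on $\lFin_L[X]$ that holds on every cyclic $R[X]$-module in that class propagates to the whole of $\lFin_L[X]$. Indeed, for $M_\phi\in\lFin_L[X]$, upper continuity reduces $L_X(M_\phi)$ and $\ent_L(M_\phi)$ to suprema over the finitely generated $R[X]$-submodules; each such submodule has the form $T(\phi,F)$ with $F=x_1R+\cdots+x_kR\in\F(M)\cap\Fin_L(M)$, and the filtration
$$0\leq T(\phi,x_1R)\leq T(\phi,x_1R)+T(\phi,x_2R)\leq\cdots\leq T(\phi,F)$$
has successive quotients that are cyclic $R[X]$-modules in $\lFin_L[X]$ (each being a quotient of $T(\phi,x_iR)$). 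Additivity then decomposes $L_X(T(\phi,F))$ and $\ent_L(T(\phi,F))$ as sums over these cyclic quotients, so the cyclic-case inequality propagates. Applied to the hypothesis \eqref{cond_leq}, this already gives $L_X\geq\ent_L$ on all of $\lFin_L[X]$.

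It remains to establish the reverse inequality on cyclic modules. Let $M_\phi=T(\phi,xR)$ be cyclic in $\lFin_L[X]$, so $xR\in\Fin(L)$. Consider the canonical $R[X]$-epimorphism $\pi\colon B(xR)\to M_\phi$ sending $(y_n)_{n\in\N}\mapsto\sum_n\phi^n(y_n)$, and let $K=\Ker(\pi)$; since $L(xR)<\infty$, both $B(xR)=(xR)^{(\N)}$ and $K$ lie in $\lFin_L[X]$. The hypothesis on $L_X$ yields $L_X(B(xR))=L(xR)$; on the other hand, Corollary \ref{gen_da_fin} applied to the zeroth summand $N=xR\subseteq B(xR)$ (for which $T_n(\beta,N)\cong(xR)^n$ has $L$-length $nL(xR)$, so $\ent_L(\beta,N)=L(xR)$) gives $\ent_L(B(xR))=L(xR)$. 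Additivity of both length functions on the short exact sequence $0\to K\to B(xR)\to M_\phi\to 0$ then yields
$$L_X(M_\phi)=L(xR)-L_X(K),\qquad \ent_L(M_\phi)=L(xR)-\ent_L(K).$$

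By the general principle established in the first paragraph, $L_X(K)\geq \ent_L(K)$, hence $L_X(M_\phi)\leq \ent_L(M_\phi)$; combined with \eqref{cond_leq} this forces $L_X(M_\phi)=\ent_L(M_\phi)$ on every cyclic $M_\phi\in\lFin_L[X]$. Invoking the reduction one more time (now for equality in place of an inequality), $L_X$ and $\ent_L$ agree throughout $\lFin_L[X]$. The main subtlety is a bookkeeping one: the displayed identities only make sense because $L(xR)<\infty$ forces $L_X(K)$ and $\ent_L(K)$ to be finite, both being bounded above by $L_X(B(xR))=\ent_L(B(xR))=L(xR)$; the computation $\ent_L(B(xR))=L(xR)$ is the place where Corollary \ref{gen_da_fin} is essential, as it bypasses the limit in the definition of $\ent_L$.
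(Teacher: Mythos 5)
Your proof is correct, and the reduction in the first paragraph is essentially the paper's (the paper uses a transfinite continuous chain with cyclic quotients; you use upper continuity to pass to finitely generated $R[X]$-submodules and then a finite filtration --- same idea). Where you genuinely depart from the paper is the reverse inequality on cyclics. The paper attaches to $M_\phi=T(\phi,xR)_\phi$ the ascending chain $\sigma$ of ideals $I_n=\{r\in R:\phi^n xr=0\}$, builds the shifted Bernoulli module $B_\sigma(R)\cong\bigoplus_{n\geq 1}\phi^nxR$, and constructs a surjection $\gamma\colon B_\sigma(R)\to M_\phi$; evaluating both $L_X$ and $\ent_L$ on $B_\sigma(R)$ then relies on Lemma~\ref{finiti+shif_gen}(2), which in turn needs an auxiliary limit argument (Lemma~\ref{prep}). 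You instead use the cruder surjection $\pi\colon B(xR)\to M_\phi$, $(y_n)_n\mapsto\sum_n\phi^n(y_n)$, for which $L_X(B(xR))=L(xR)$ is the hypothesis verbatim and $\ent_L(B(xR))=L(xR)$ follows immediately from Corollary~\ref{gen_da_fin}. This is a real simplification: it avoids the generalized Bernoulli functor $B_\sigma$ and Lemma~\ref{finiti+shif_gen} entirely in the proof of this lemma, trading a tighter (smaller) approximating module for a more transparent computation. The cost is that $\Ker(\pi)$ is larger than $\Ker(\gamma)$, but this is irrelevant since one only needs $L_X(\Ker(\pi))\geq\ent_L(\Ker(\pi))$ from the first step, together with finiteness of all quantities (guaranteed because they are bounded by $L(xR)<\infty$), which you correctly flag as the only bookkeeping point.
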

\begin{proof}
Given $N_\psi\in \lFin_L[X]$, we can find a continuous chain $\{(N_\alpha)_\psi:\alpha<\tau\}$ of $R[X]$-submodules of $N_\psi$ such that $N_0=0$, $(N_{\alpha+1}/N_{\alpha})_\psi$ is cyclic, and $\bigcup_{\alpha<\tau}N_\alpha=N$. Then, using additivity, sup-continuity and the condition \eqref{cond_leq}, one can prove by transfinite induction that $L_X(N_\psi)\geq \ent_L(N_\psi)$. In the same way one can show that the condition
\begin{equation}\label{cond_geq}L_X(M_\phi)\leq \ent_L(M_\phi)\ \ \ \text{ for any cyclic $M_\phi\in\lFin_L[X]$,} \end{equation}
implies that $L_X(N_\psi)\leq \ent_L(N_\psi)$ for any $N_\psi\in\lFin_L[X]$. Thus, we have just to verify that \eqref{cond_geq} always holds. Indeed, let $M_\phi=T(\phi,xR)_\phi\in\lFin_L[X]$ be cyclic and consider the following sequence of proper right ideals:
$$\sigma \  :\ \ I_0 \leq I_1 \leq I_2 \leq \cdots \leq \bigcup_n I_n=I_\infty $$
where $I_n=\{r\in R: \phi^nxr=0\}$. One can check directly that $R/I_n\cong \phi ^n xR$, so that the following map
$$\textstyle\gamma: B_\sigma(R)	\longrightarrow T(\phi, xR)_\phi \ \ \text{ such that }\ \ \gamma(r_n+I_n)_{n\geq1}=\sum_{n\geq1}\phi^n xr_n$$ 
is a well-defined and surjective homomorphism of right $R[X]$-modules, giving rise to the following short exact sequence:
$$0 \to \Ker (\gamma) \to B_\sigma(R) \to T(\phi, xR)_\phi \to 0\,.$$
By Lemma \ref{finiti+shif_gen},  $\ent_L(B_\sigma(R))=L(R/I_\infty)=L_X(B_\sigma(R))$. Furthermore, by the first part of the proof, 
$$L_X(\Ker (\gamma)) \geq \ent_L(\Ker (\gamma))$$
and so, by additivity $L_X(T(\phi, xR)_\phi) \leq  \ent_L(T(\phi, xR)_\phi)$, verifying \eqref{cond_geq}, and concluding therefore the proof.
\end{proof}

\begin{theorem}[Uniqueness Theorem]\label{UT} 
Let $R$ be an archimedean non-discrete valuation domain and let $L=L_v$ be the induced non-discrete length function on $\mod R$. There exists a unique length function $L_X : \lFin_L[X]\to \R^*$ such that, 
\begin{equation}\label{cond_ber_gen}L_X(B(N))=L(N)\ \ \text{ for all } \ N\in\Fin(L)\,.\end{equation}
Such a function coincides with the restriction of $\ent_L$ to $\lFin_L[X]$.
\end{theorem}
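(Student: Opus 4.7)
Existence is immediate: the restriction of $\ent_L$ to $\lFin_L[X]$ is a length function by Theorem \ref{coro_AT}, and for every $N\in\Fin(L)$ the identification $T_n(\beta,N)\cong N^n$ together with Corollary \ref{gen_da_fin} yields $\ent_L(B(N))=\ent_L(\beta,N)=L(N)$. For uniqueness, fix any length function $L_X\colon\lFin_L[X]\to\R^*$ satisfying $L_X(B(N))=L(N)$ for all $N\in\Fin(L)$. By Lemma \ref{riduzione_una_dis_sui_ciclici}, it is enough to verify condition \eqref{cond_leq}: $L_X(M_\phi)\geq\ent_L(M_\phi)$ for every cyclic $M_\phi=T(\phi,xR)\in\lFin_L[X]$.

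Fix such an $M_\phi$ and set $I_n=\mathrm{ann}(\phi^n x)$, $\sigma\colon I_0\leq I_1\leq\cdots$, and $I_\infty=\bigcup_nI_n$. Two degenerate cases are immediate: if $\phi^Nx=0$ for some $N$, then $M_R$ is finitely generated, hence $L$-finite, so Lemma \ref{finiti+shif_gen}(1) gives $L_X(M_\phi)=0=\ent_L(M_\phi)$; if $I_\infty=R$, then $\ent_L(M_\phi)\leq L(R/I_\infty)=0$ and the inequality is trivial. Assume henceforth that every $I_n$ is a proper non-zero ideal and $I_\infty\subsetneq R$; using the formula $L_v(R/I)=\inf\{v(a):a\in I\}$ characterizing $L$ on cyclic $R$-modules, one gets $L(R/I_\infty)=\lim_n L(R/I_n)$, a positive limit of a non-increasing sequence.

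Consider the canonical surjection $\gamma\colon B_\sigma(R)\to M_\phi$ from the proof of Lemma \ref{riduzione_una_dis_sui_ciclici}. By Lemma \ref{finiti+shif_gen}(2), $L_X(B_\sigma(R))=L(R/I_\infty)=\ent_L(B_\sigma(R))$, so additivity reduces \eqref{cond_leq} to the bound
$$L_X(\ker\gamma)\leq L(R/I_\infty)-\ent_L(M_\phi)=\ent_L(\ker\gamma).$$
Moreover, since the $R[X]$-submodules $\bigoplus_{n\geq N}R/I_n\subseteq B_\sigma(R)$ have $L$-finite complements $\bigoplus_{n<N}R/I_n$, Lemma \ref{finiti+shif_gen}(1) and additivity give $L_X(\ker\gamma)=L_X(\ker\gamma\cap\bigoplus_{n\geq N}R/I_n)$ for every $N$, so only the tail of the defining relations of $\ker\gamma$ matters.

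The main obstacle is to convert this stability into the desired numerical upper bound on $L_X(\ker\gamma)$. The plan is to exploit the density of the value group $\Gamma(R)\subseteq\R$: for every $\varepsilon>0$, choose principal ideals $a_nR\subseteq I_n$ with $v(a_n)$ approaching $L(R/I_\infty)$ from above, so that the Bernoulli-like module $B_{\sigma'}(R)$ with $\sigma'\colon a_0R\leq a_1R\leq\cdots$ has $L_X$-value computable via Lemma \ref{finiti+shif_gen}(2) and the Bernoulli axiom. Comparing $\ker\gamma\cap\bigoplus_{n\geq N}R/I_n$, for $N$ large, with the analogous object for $B_{\sigma'}(R)$, modulo $L$-finite error terms that do not affect $L_X$, and invoking upper continuity of $L_X$, should give $L_X(\ker\gamma)\leq \ent_L(\ker\gamma)+\varepsilon$ for arbitrary $\varepsilon>0$, hence the required bound. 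This verifies \eqref{cond_leq} and, via Lemma \ref{riduzione_una_dis_sui_ciclici}, completes the proof.
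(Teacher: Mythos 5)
Your reduction via Lemma \ref{riduzione_una_dis_sui_ciclici}, and the existence argument, are correct. The problem is the strategy for verifying \eqref{cond_leq}: you reuse the surjection $\gamma\colon B_\sigma(R)\to M_\phi$ (with $I_n=\mathrm{ann}(\phi^n x)$) from the lemma's proof, so that additivity reduces the goal to $L_X(\ker\gamma)\leq\ent_L(\ker\gamma)$. But this is an \emph{upper} bound on $L_X$ of a non-cyclic module, which is exactly the kind of conclusion Lemma \ref{riduzione_una_dis_sui_ciclici} would deliver \emph{after} \eqref{cond_leq} is already known on all cyclics; obtaining it directly here risks circularity, and nothing in the Bernoulli axiom or upper continuity of $L_X$ by itself pushes $L_X(\ker\gamma)$ down. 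Your proposed remedy --- approximating the $I_n$ by principal ideals $a_nR$, comparing kernels modulo $L$-finite pieces, and ``invoking upper continuity'' to squeeze in an extra $\varepsilon$ --- is not carried out, and the mechanism by which upper continuity would give an upper bound (it naturally produces lower bounds via suprema over finitely generated submodules) is unclear. As stated, the key step is a plan, not a proof.

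The paper avoids this by constructing a surjection in the \emph{opposite} direction: using the larger ideals $J_n=\{r\in R:\phi^n xr\in T_n(\phi,xR)\}$ (so $R/J_n\cong T_{n+1}(\phi,xR)/T_n(\phi,xR)$), one defines $\delta\colon T(\phi,xR)_\phi\to B(R/J_\infty)$ by $\delta(\sum_n\phi^n xr_n)=(r_n+J_\infty)_n$. Well-definedness of $\delta$ is the real technical point and rests on a purity result from \cite{Z2} (that $T_n(\phi,xR)$ is pure in $T_{n+1}(\phi,xR)$), which is special to valuation domains. Once $\delta$ is a surjection of $R[X]$-modules, additivity of $L_X$ immediately gives $L_X(M_\phi)\geq L_X(B(R/J_\infty))=L(R/J_\infty)$, and Proposition \ref{non-discrete} with Corollary \ref{entropia_con_potenza} identifies $\ent_L(M_\phi)=L(R/J_\infty)$, so \eqref{cond_leq} follows without needing any upper bound on a kernel. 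In short: map onto a Bernoulli module rather than from one, and use the ideals $J_n$ that encode the trajectory factors, not the annihilators $I_n$. (Incidentally, your two ``degenerate cases'' coincide --- $I_\infty=R$ forces $1\in I_N$ for some $N$, i.e.\ $\phi^N x=0$ --- and the claim that $\lim_n L(R/I_n)$ is positive is unjustified and unnecessary.)
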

\begin{proof}
Notice that in this case $\lFin(L)$ coincides with the class of torsion $R$-modules. For the existence part it is enough to notice that $\ent_L$ satisfies the required properties. It remains to check uniqueness.  Indeed, let $L_X:\lFin_L[X]\to \R^*$ be a length function satisfying \eqref{cond_ber_gen}, let $M_\phi\in\lFin_L[X]$ and let us show that $L_X(M_\phi)=\ent_L(M_\phi)$. By Lemma \ref{riduzione_una_dis_sui_ciclici}, it is enough to show that 
\begin{equation}\label{cond_leq}
L_X(M_\phi)\geq \ent_L(M_\phi)\ \ \ \text{ for any cyclic $M_\phi\in\lFin_L[X]$,} 
\end{equation}
Thus, let $M_\phi=T(\phi,xR)_\phi\in\lFin_L[X]$ and consider the following sequence of proper right ideals:
$$\tau \  :\ \ J_0 \leq J_1 \leq J_2 \leq \cdots \leq \bigcup_n J_n =J_\infty,$$
where $J_n=\{r\in R: \phi^nxr\in T_n(\phi,xR)\}$. One can check directly that $R/J_n\cong T_{n+1}(\phi,xR)/T_n(\phi,xR)\cong \phi ^n xR/( \phi ^n xR \cap T_n(\phi,xR))$. Consider the following map
$$\textstyle\delta: T(\phi, xR)_\phi	\longrightarrow  B(R/J_\infty) \ \ \text{ such that }\ \ \delta\left(\sum_{n\geq 0}  \phi^n xr_n\right)=(r_n+J_\infty)_{n\geq0}\,.$$
We claim that $\delta$ is well defined. Indeed, let $xr_0+\phi x r_1+ \cdots + \phi^n x r_n=0$, then $r_n \in J_n\leq J_\infty$. By \cite[Proposition 1.6]{Z2}, $T_{n}(\phi,xR)$ is pure in $T_{n+1}(\phi,xR)$, and so there exist $s_1,\dots,s_{n-1}\in R$, such that
$$-(xr_0+\phi x r_1 + \ldots + \phi^{n-1} x r_{n-1})=\phi^n x r_n= r_n(xs_0+\phi x s_1 + \ldots + \phi^{n-1} x s_{n-1})\,.$$
But then, $x(r_0 - r_ns_0)+\phi x (r_1 - r_ns_1)+\ldots+ \phi^{n-1} x (r_{n-1} - r_ns_{n-1})=0$,
so that $r_{n-1} - r_ns_{n-1} \in J_{n-1}$. Thus, $r_{n-1} \in J_n \leq J_\infty$. Proceeding inductively this way, we can show that $r_i\in J_\infty$ for all $i=0,\dots,n$, that is, $\delta(xr_0+\phi x r_1+ \ldots + \phi^n x r_n)=0$, as claimed. Notice also that $\delta$ is a surjective homomorphism of right $R[X]$-modules, so that we obtain the following short exact sequence in $\lFin_L[X]$:
$$0 \to \Ker (\delta) \to T(\phi, xR)_\phi\to  B(R/J_\infty) \to 0\,.$$
By Lemma \ref{finiti+shif_gen}, $\ent_L(B(R/J_\infty))=L(R/J_\infty)=L_X(B(R/J_\infty))$. 
Furthermore, Proposition \ref{non-discrete} and Corollary \ref{entropia_con_potenza} imply that $\ent_L(T(\phi,xR)_{\phi})=L(R/J_\infty)$, therefore $\ent_L(\Ker (\delta))=0$, by AT; consequently we get the desired inequality: 
\begin{equation*}L_X(T(\phi, xR)_\phi ) \geq \ent_L(T(\phi, xR)_\phi)\,.\qedhere\end{equation*}
\end{proof}

\medskip\noindent
{\bf Acknowledgement.} We are indebted to Paolo Zanardo for providing us the preprint \cite{Z2}.


\begin{thebibliography}{}


\bibitem{amenable_monoid} T. Ceccherini-Silberstein, F. Krieger, M. Coornaert, {\em An analogue of {F}ekete's lemma for subadditive functions on cancellative amenable semigroups}, preprint (2012)


\bibitem{DG} D.Dikranjan, A. Giordano Bruno, \emph{Entropy on abelian groups}, submitted.

\bibitem{DGSZ} D.Dikranjan, B. Goldsmith, L.Salce, P. Zanardo,  \emph{Algebraic entropy for abelian groups}, Trans. Amer. Math. Soc.  \textbf{361} (2009), 3401-3434.  

\bibitem{DGSV} D.Dikranjan, A. Giordano Bruno, L. Salce, S. Virili, \emph{Intrinsic algebraic entropy}, J. Pure Applied Algebra  \textbf{219} (2015), no. 7, 2933-2961. 

\bibitem{DG1} D.Dikranjan, A. Giordano Bruno, \emph{ Limit free computation of entropy}, Rend. Istit. Mat. Univ. Trieste   \textbf{44} (2012), 297-312. 



\bibitem{F} M. Fekete,  \emph{\"Uber die Verteilung der Wurzeln bei gewisser algebraichen Gleichungen mit ganzzahlingen K\"ozienten}, Math. Zeitschr. \textbf{17} (1923) 228-249.



\bibitem{G} P. Gabriel,  \emph{Des cat\'egories ab\'eliennes}, Bull. Soc. Math. France   \textbf{90} (1962). 

\bibitem{GV} A. Giordano Bruno, S. Virili,  \emph{About the algebraic Yuzvinski formula}, submitted. 

\bibitem{N} D. G. Northcott, {\em Lessons on rings, modules and multiplicities}, Cambridge University Press, 1968.

\bibitem{NR} D. G. Northcott, M. Reufel,  \emph{ A generalization of the concept of length}, Quart. J. Math. (Oxford) (2), \textbf{16} (1965), 297--321. 

\bibitem{R} L.H. Rowen,  \emph{Ring Theory, Vol I}, Pure Appl. Math. vol. 127, Academic Press, Boston, MA, 1988.

\bibitem{S}  L. Salce,   \emph{Some results on the algebraic entropy}, Proc. 2011 Muelheim Conference, Contemporary Math. AMS, \textbf{576} (2012), 297-304.  

\bibitem{SVV}  L. Salce,  P. V\'amos, S. Virili,  \emph{ Length functions, multiplicities and algebraic entropy}, Forum Math.   \textbf{25} (2013), no. 2, 255-282. 

\bibitem{SZ}  L. Salce,  P. Zanardo,  \emph{ A general notion of algebraic entropy and the rank-entropy},  Forum Math.   \textbf{21} (2009), no. 4, 579-599. 

\bibitem{V1} P. V\'amos,  \emph{ Length functions on modules}, Ph.D. thesis, University of Sheffield, 1968. 

\bibitem{V2} P. V\'amos,  \emph{Additive Functions and Duality over Noetherian Rings}, Quart. J. of Math. (Oxford) (2) \textbf{19} (1968), 43--55.

\bibitem{VI} S. Virili,  \emph{ Length functions of Grothendieck categories with applications to infinite group representations}, preprint (40 pp.), June 2013. 


\bibitem{Z} P. Zanardo,  \emph{ Multiplicative invariants and length functions over valuation domains}, J. Commut. Algebra   \textbf{3} (2011), no. 4, 561-587.

\bibitem{Z2} P. Zanardo, Entropy for valuation domains, preprint. 

\end{thebibliography}
\end{document}